\documentclass[11pt, reqno]{amsart}
\usepackage{cite}
\usepackage{physics}
\usepackage{enumerate}
\usepackage{amssymb}
\usepackage[ansinew]{luainputenc}
\usepackage{amsfonts}
\usepackage{framed}
\usepackage{lipsum}
\usepackage{amsmath,amscd,textcomp}
\usepackage{graphics}
\usepackage{latexsym}
\usepackage{amsthm, amsfonts, graphicx, color} 
\usepackage{mathtools}                 
\usepackage[colorlinks=true, linkcolor=blue, citecolor=blue,urlcolor=blue,bookmarks=false,hypertexnames=true]{hyperref} 
\label{key}\usepackage[dvipsnames]{xcolor}
\usepackage{mathrsfs}
\usepackage[english]{babel}
\usepackage{float}
\usepackage{array}
\usepackage{booktabs} 
\usepackage{orcidlink}
\newcolumntype{S}{>{\small}l}
\allowdisplaybreaks
\newtheorem{thm}{Theorem}[section]

\newtheorem{theorem}[thm]{Theorem}
\newtheorem{lemma}[thm]{Lemma}

\theoremstyle{definition}
\newtheorem{definition}[thm]{Definition}
\theoremstyle{remark}
\newtheorem{remark}[thm]{Remark}
\numberwithin{equation}{section}
\newtheorem{example}[thm]{Example}

\allowdisplaybreaks

\numberwithin{equation}{section}

\DeclareMathOperator*{\supp}{Supp}


\newcommand{\Q}{\sigma}
\newcommand{\y}{\beta}

\newcommand{\T}{\varsigma}

\newcommand{\p}{J}
\newcommand{\J}{j}

\usepackage{hyperref}
\usepackage{color}

\def\supp{{\operatorname{supp}}}

\def\supp{{\operatorname{supp}}}

\usepackage{graphicx}


\setlength{\paperwidth}{210mm} \setlength{\paperheight}{297mm}
\setlength{\oddsidemargin}{0mm} \setlength{\evensidemargin}{0mm}
\setlength{\topmargin}{-20mm} \setlength{\headheight}{10mm}
\setlength{\headsep}{7mm} \setlength{\textwidth}{160mm}
\setlength{\textheight}{9.6in}\setlength{\footskip}{20mm}
\setlength{\marginparwidth}{0mm} \setlength{\marginparsep}{0mm}
\DeclareMathSymbol{\subsetneqq}{\mathbin}{AMSb}{36}

\begin{document}
	
	\begin{flushleft}
		{\bf\Large {Special-Affine Wavelets : Multi Resolution Analysis and Function Approximation in $\boldsymbol{L^2(\mathbb R)}$}}
	\end{flushleft}
	
	\parindent=0mm \vspace{.4in}
	
	{\bf{Vikash K. Sahu$^{a,\orcidlink{0009-0005-7387-7836}}$, Waseem Z. Lone$^{a, \orcidlink{0000-0002-9826-9475}}$, and Amit K. Verma$^{a,\orcidlink{0000-0001-8768-094X}}$ }}
	
	\parindent=0mm \vspace{.1in}
	{\small \it $^{a}$Department of Mathematics, Indian Institute of Technology Patna, Bihta, Patna 801103, (BR) India. \\[2mm]
		E-mail: $\text{vikash\_2421ma11@iitp.ac.in}$;\,$\text{waseem\_24ps41@iitp.ac.in};\,$\text{akverma@iitp.ac.in}} \\
	
	$^{\star}$Corresponding author: $\text{akverma@iitp.ac.in}$ 
	
	\parindent=0mm \vspace{.2in}
	{\small {\bf Abstract.} The multiresolution analysis (MRA) associated with the Special affine Fourier transform (SAFT) provides a structured approach for generating orthonormal bases in \( L^2(\mathbb R) \), making it a powerful tool for advanced signal analysis. This work introduces a robust sampling theory and constructs multiresolution structures within the SAFT domain to support the formation of orthonormal bases. Motivated by the need for a sampling theorem applicable to band-limited signals in the SAFT framework, we establish a corresponding theoretical foundation. Furthermore, a method for constructing orthogonal bases in $L^2(\mathbb R)$ is proposed, and the theoretical results are demonstrated through illustrative examples.
		
		\parindent=0mm \vspace{.1in}
		{\bf{Keywords:}}  Fourier transform; Special affine Fourier transform; Shannon's sampling theorem; Multiresolution analysis; Special affine wavelet transform; Orthonormal basis.
		
		\parindent=0mm \vspace{.1in}
		{\bf {Mathematics Subject Classification:}} 42C40; 46E30; 42A38; 44A05; 94A12.}
	
	\def\subjclassname{{\rm 2025} {\it Mathematics Subject Classification} }

	\section{Introduction}
	The Fourier transform (FT) is a fundamental mathematical tool in mathematical analysis and and signal processing that decomposes signals into its constituent frequency components. Since its inception, the FT has found widespread applications across numerous domains in science and engineering \cite{Kith, Debnath1,Debnath2}. For  any $f\in L^2(\mathbb R)$, the FT of is defined by
	\begin{align}\label{S1E1}
		\mathcal{F}\big[f\big](\zeta) =	\hat{f}(\zeta)=\dfrac{1}{\sqrt{2\pi}} \int_\mathbb R f(x)\, e^{-i\zeta x}\,dx.
	\end{align}	
	
	\parindent=8mm\vspace{.05in}	
	Over time, various generalizations of the FT have emerged to address limitations in handling non-stationary and chirp signals. One such notable generalization is the fractional Fourier transform (FrFT),  which performs a rotation in the time-frequency plane and is well-suited for analyzing chirp-like signals \cite{OZ,Almeida,Namias}. The FrFT of order $\alpha$ of a signal $f(t)$ is defined as
	\begin{align}\label{S1E2}
		\mathcal{F}_\alpha\big[f\big](\zeta) = \sqrt{\dfrac{1 - i \cot \alpha}{2\pi}} \int_{\mathbb R} f(t) \exp\Big\{ i\pi\left( x^2 + \zeta^2 \right)\cot\alpha - 2\pi i\, x \zeta\, \csc \alpha \Big\}\, dx,~~\alpha \notin \{ k\pi : k \in \mathbb{Z} \}.
			\end{align}	
	
	\parindent=8mm\vspace{.05in}
	A significant extension to the theory of FTs was introduced by Moshinsky and Quesne \cite{Moshinsky}, in the form of linear canonical transform (LCT), which unifies several well-known integral transforms, including the FrFT. The LCT of a signal $f(t)$ with respect to a real $2 \times 2$ matrix $M=\left[\begin{smallmatrix} A & B \\ C & D \end{smallmatrix}\right]$ satisfying $AD - BC = 1$ is defined by
	\begin{align}\label{S1E3}
		\mathcal{L}_{M}\big[f\big](\zeta) = \dfrac{1}{\sqrt{2\pi i B}} \int_{\mathbb R} f(x) \exp\Big\{ \tfrac{i}{2B} \left(A x^2 - 2 x \zeta + D \zeta^2 \right) \Big\} \, dx, \quad  B \neq 0.
	\end{align}
	
	\parindent=8mm\vspace{.05in}
	Despite these advancements, there remained a need for a unified and flexible framework that could encompass these transforms under a single structure. This led to the development of the special affine Fourier transform (SAFT). The SAFT introduces additional flexibility by incorporating time shifts and frequency modulations in the LCT kernel \cite{Healy,Abe1,Abe2}. Given a real uni-modular augmented matrix $S=\left[\begin{smallmatrix} A & B & : & p \\ C & D & : & q \end{smallmatrix}\right]$ with $B\neq 0$, the SAFT of a function $f \in L^2(\mathbb R)$ is given by 
	\begin{align*}
		\mathscr{O}_S\big[f\big](\zeta) =  \frac{1}{\sqrt{2\pi iB}}\int_{\mathbb R} f(x) \exp\Big\{\tfrac{i}{2B} \left( A x^2 + 2x(p - \zeta) - 2\zeta(Dp - Bq) + D\left(\zeta^2 + p^2\right)\right)\Big\}\,dx.
	\end{align*}
	Owing to its additional degrees of freedom, the SAFT has achieved significant success across various domains, including optics, signal processing, electrical and communication systems, quantum mechanics, and numerous other areas of science and engineering \cite{Kumar1,Kumar2,Hitzer,Lone, Teali,Shah,Bhandari,Xiang,Qiang}.
	
	\parindent=8mm\vspace{.1in}
	Besides a lot of advantages, the SAFT has one major drawback due to its global kernel; that is, it only provides spectral information with no indication about the time localization of the SAFT spectral components of the given signal. This limitation is completely addressed by the special affine wavelet transform (SAWT) \cite{Shah1}. The essence of the SAWT is that it employs reasonably flexible window functions endowed with higher degrees of freedom called as special affine wavelets.
	
	\parindent=8mm\vspace{.1in}	
	One of the prime advantages of wavelets is that they can provide wavelet bases in which the basis functions are constructed by dilating and translating the mother wavelet. In the late 1986, Meyer and Mallat recognized that the construction of different wavelet bases can be realized by the so-called multiresolution analysis (MRA) \cite{Meyer,Hernhdez}. This general formalism of MRA has gained considerable attention and has offered an alternative approach for the construction of wavelets. 
	
	\parindent=8mm\vspace{.1in}
	Shi et al. firstly proposed a fractional version of MRA in \cite{15}, while Ahmad \cite{16} further investigated fractional MRA and related scaling functions in the $L^2(\mathbb R)$ space. Building on these ideas, Dai et al. \cite{17} introduced the fractional wavelet transform (FRWT), developed an associated MRA structure, and constructed corresponding orthogonal fractional wavelets. Shah and Lone \cite{18,19} explored the notion of special affine MRA and established orthonormal wavelets in $L^2(\mathbb R)$ by discretizing the continuous special affine wavelets.  More recently, Verma et al. \cite{Verma} constructed the MRA associated with the quadratic-phase Fourier transform.
	
	\parindent=8mm\vspace{.1in}
	Keeping in view the elegance of the MRA and the structural flexibility of the SAFT, we propose a novel framework called special affine multiresolution analysis (SAMRA). This framework is built by defining a family of nested subspaces, introducing explicit scaling functions, and constructing corresponding orthonormal bases using modulated sinc functions. Unlike the approach developed by Shah and Lone \cite{18,19}, which is based on discretizing continuous special affine wavelets, our formulation leverages the sampling theorem in the SAFT domain as the foundation for constructing SAMRA. This comprehensive structure not only generalizes the classical MRA but also provides greater adaptability and analytical depth. 
	
	\parindent=8mm\vspace{.1in}
	The remainder of the paper is organized as follows: Section \ref{S2} presents the preliminary analysis and explores the basic sampling theorem in the QPFT domain. In Section \ref{S3}, we construct orthonormal special affine wavelets within the framework of SAMRA. Finally, Section \ref{S4} provides concluding remarks.
	
	\section{Preliminaries}\label{S2}
	
	\parindent=0mm\vspace{.0in}
	In this section, we present a concise overview of the SAFT. For notational clarity, we represent a $2 \times 2$ real augmented matrix $S=\left[\begin{smallmatrix} A & B & : & p \\ C & D & : & q \end{smallmatrix}\right]$ in the compact form $S = [M \mid \Lambda]$, where $M=\left[\begin{smallmatrix} A & B \\ C & D  \end{smallmatrix}\right]$ is the core transformation matrix and $\Lambda = (p, q)^\top$ encodes the affine parameters. Throughout this work, we restrict our attention to the case $\det M = 1$ and $B > 0$, as these conditions ensure the invertibility of the associated transform and preserve the essential properties of the SAFT.
	
	\begin{definition}\label{S2D1}
		Given a parametric matrix $S = [M \mid\Lambda]$, the SAFT of a function $f \in L^2(\mathbb R)$ is denoted by $\mathscr{O}_{S}[f](\zeta)$ and is defined as			
		\begin{align}\label{S2E1}
			\mathscr{O}_{S}\big[f\big](\zeta) = \int_{\mathbb R} f(x) \, \mathcal K_S (x,\zeta) \, dx,
		\end{align}
		where $\mathcal K_S(x,\zeta)$  is the SAFT kernel associated with the matrix $S$ and is given by		
		\begin{align}\label{S2E2}
			\mathcal K_S(x,\zeta) = 
			\begin{cases}
				\frac{1}{\sqrt{2\pi i B} }\exp \Big\{\tfrac{i}{2B} \left( A x^2 + 2x(p - \zeta) - 2\zeta(Dp - Bq) + D\left(\zeta^2 + p^2\right)\right)\Big\}, & B\neq 0
				\\[3mm]
				\sqrt{D}  \exp \Big\{\frac{i C D (\zeta - p)^2}{2} + i q \zeta \Big\} f\big(D(\zeta - p)\big) , & B=0.
			\end{cases}
		\end{align}			
	\end{definition}	
	The inverse transform corresponding to \eqref{S2E2} is given by
	\begin{align}\label{S2E3}
		f(x) = \mathfrak{I}_{S} \int_{\mathbb R} \mathscr{O}_S \big[f\big](\zeta) \,\mathcal K_{S^{-1}}(\zeta, x)\, d\zeta,
	\end{align}
	where the prefactor $\mathfrak{I}_{S}$ is defined as
	\begin{align*}
		\mathfrak{I}_{S} = \exp \Big\{ \tfrac{i}{2} \left( CDp^2 + ABq^2 - 2ADpq \right) \Big\},
	\end{align*}
	and $S^{-1} = \left[M^{-1}\mid \Lambda^{-1}\right]$ with 
	$$M^{-1}=\left[\begin{matrix} 	D & -B \\ -C & A \end{matrix}\right]\quad \mbox{and} \quad \Lambda^{-1}=\left[\begin{matrix} 	Dp - Bq \\ Aq - Cp\end{matrix}\right].$$
	
	\begin{remark}\label{S2R2}
		The inverse kernel $\mathcal{K}_{S^{-1}}(x,\zeta)$ is related to the forward kernel via the identity $\mathcal K_{S^{-1}}(x,\zeta) = \overline{\mathfrak{I}_{S}\, \mathcal K(x,\zeta)}$.
	\end{remark}
	
	Moreover, the Parseval's formula for the special affine Fourier transform is given by
	\begin{align}\label{1}
		\big\langle f,\, g\big\rangle=\Big\langle \mathscr O_{S}\big[f\big],\, \mathscr O_{S}\big[g\big]\Big\rangle,\quad \forall~ f,g\in L^2(\mathbb R).
	\end{align}

	The following lemma plays a crucial role in the formulation of the SAMRA.	
	\begin{lemma}\label{S2L3}
		Let $f \in L^2(\mathbb R)$ and $ a\neq 0$. Then the following identity holds:
		\begin{align}\label{S2E4}
			&\rm{(i).}\quad\mathscr{O}_{S} \big[f(ax)\big] (\zeta) = \frac{1}{|a|} \exp\Big\{ \tfrac{i}{2B} \left( 2\zeta(Dp - Bq) \left( \tfrac{1}{a} - 1 \right) - \zeta^2 D \left( \tfrac{1}{a^2} - 1 \right) \right) \Big\}\notag 
			\\
			&\phantom{-------.----} \times  \mathscr{O}_{S} \Big[ \exp\Big\{ \tfrac{i}{2B} \left( Ax^2 \left( \tfrac{1}{a^2} - 1 \right) + 2Dx \left( \tfrac{1}{a} - 1 \right) \right) \Big\} f(x) \Big]  \left( \tfrac{\zeta}{a} \right), 
			\\
			&\rm{(ii).}\quad\mathscr{O}_{S} \left[ \exp\Big\{-\tfrac{i}{2B}\left(Ax^2\big(1-a^2\big)+2xp(1-a)\right)\Big\}f(ax) \right] (\zeta) \notag 
			\\
			&\label{S2E5} \phantom{-----.---}=\frac{1}{|a|} \exp\Big\{ \tfrac{i}{2B} \left( 2\zeta(Dp - Bq) \left( \tfrac{1}{a} - 1 \right) - \zeta^2 D \left( \tfrac{1}{a^2} - 1 \right) \right) \Big\} \, \mathscr{O}_{S} \big[f\big ]\left(\tfrac{\zeta}{a}\right).
		\end{align}		
	\end{lemma}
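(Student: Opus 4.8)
The plan is to derive both identities from a single change of variables in the defining integral \eqref{S2E1}, followed by an algebraic regrouping of the quadratic phase carried by the kernel \eqref{S2E2}. I work throughout in the case $B\neq 0$. A useful preliminary observation is that the normalising prefactor $1/\sqrt{2\pi i B}$ is the same in $\mathcal K_S(u/a,\zeta)$ and in $\mathcal K_S(u,\zeta/a)$, so it plays no role when these two kernels are compared; only their exponents matter.

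For part (i), I would begin from $\mathscr{O}_S[f(ax)](\zeta)=\int_{\mathbb R} f(ax)\,\mathcal K_S(x,\zeta)\,dx$ and substitute $u=ax$, which contributes the Jacobian factor $1/|a|$, the absolute value being essential since $a<0$ reverses the orientation of the line. This rewrites the integral as $\tfrac{1}{|a|}\int_{\mathbb R} f(u)\,\mathcal K_S(u/a,\zeta)\,du$. The core of the argument is to compare the exponent of $\mathcal K_S(u/a,\zeta)$ with that of $\mathcal K_S(u,\zeta/a)$: placing both over the common denominator $2B$ and subtracting, the mixed term $2u\zeta/a$ and the constant $Dp^2$ cancel, and the remainder separates into a piece depending only on $\zeta$, namely $2\zeta(Dp-Bq)(\tfrac1a-1)-D\zeta^2(\tfrac1{a^2}-1)$, and a chirp depending only on $u$, namely $Au^2(\tfrac1{a^2}-1)+2up(\tfrac1a-1)$. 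The $\zeta$-piece pulls out of the integral as the stated prefactor, while the $u$-chirp multiplies $f(u)$ to produce exactly the SAFT of a modulated function evaluated at $\zeta/a$, giving \eqref{S2E4}.

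For part (ii), I would note that the chirp prepended to $f$ on the left of \eqref{S2E5} is engineered precisely to annihilate the $u$-chirp generated in part (i). Indeed, after the same substitution $u=ax$ the factor $\exp\{-\tfrac{i}{2B}(Ax^2(1-a^2)+2xp(1-a))\}$ becomes $\exp\{-\tfrac{i}{2B}(Au^2(\tfrac1{a^2}-1)+2up(\tfrac1a-1))\}$, which is the multiplicative inverse of the $u$-chirp isolated above. Its presence cancels that chirp pointwise in $u$, and what survives is $\tfrac1{|a|}$ times the $\zeta$-only prefactor times $\int_{\mathbb R} f(u)\,\mathcal K_S(u,\zeta/a)\,du=\mathscr{O}_S[f](\zeta/a)$, which is \eqref{S2E5}. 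Equivalently, \eqref{S2E5} may be read off from \eqref{S2E4} by substituting for $f$ the appropriately demodulated function.

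The computation is entirely elementary, so the only genuine obstacle is disciplined bookkeeping of the handful of monomials in $u$ and $\zeta$ appearing in the two exponents. The one point that must be checked carefully is that no surviving term couples $u$ and $\zeta$ together, since any such coupling would prevent the $\zeta$-dependent factor from leaving the integral; the cancellation of the mixed term $2u\zeta/a$ in the phase difference is exactly what licenses the separation, and it is the step I would verify first.
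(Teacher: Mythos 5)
Your proposal is correct and follows essentially the same route as the paper: the substitution $u=ax$ with Jacobian $1/|a|$, followed by regrouping the kernel phase so that the mixed term $2u\zeta/a$ cancels and the remainder splits into a $\zeta$-only prefactor and a $u$-only chirp, with (ii) obtained by cancelling that chirp (the paper does this by introducing a general multiplier $h$ and then choosing it to be the inverse chirp, which is the same idea). One remark: your bookkeeping yields the modulation $2xp\left(\tfrac{1}{a}-1\right)$ rather than the $2Dx\left(\tfrac{1}{a}-1\right)$ printed in the statement of (i); since the paper's own proof also produces $2up\left(\tfrac{1}{a}-1\right)$ (and this is what makes the chirp in (ii) cancel exactly), the $D$ in the stated identity is a typo and your version is the correct one.
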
 
	
	\begin{proof}
		(i). Using the definition of the SAFT, we have
		\begin{align*}
			&\mathscr{O}_{S}\left[f(ax)\right](\zeta) = \frac{1}{\sqrt{2\pi i B} } \int_{\mathbb R} f(ax) \exp \Big\{\tfrac{i}{2B} \left( A x^2 + 2x(p - \zeta) - 2\zeta(Dp - Bq) + D\left(\zeta^2 + p^2\right)\right)\Big\} \,dx
			\\
			& = \frac{1}{|a| \sqrt{2iB\pi}} \int_{\mathbb R} f(u)\exp\left\{\tfrac{i}{2B}  \left(\tfrac{Au^2}{a^2} + \tfrac{u}{a}\left(p - \zeta\right) - 2\zeta(Dp - Bq) + D\left(\zeta^2 + p^2\right)\right)\right\} du
			\\
			& = \frac{1}{|a| \sqrt{2iB\pi}} \exp\Big\{\tfrac{i}{2B} \left(- 2\zeta\left(Dp - Bq\right) + D\left(\zeta^2 + p^2\right)\right)\Big\} \int_{\mathbb R} \exp\left\{\tfrac{i}{2B}\left( {\tfrac{Au^2}{a^2} + \tfrac{2u}{a}(p - \zeta) }\right)\right\} f(u) \, du
			\\
			& = \frac{1}{|a| \sqrt{2iB\pi}} \exp\Big\{\tfrac{i}{2B} {\left(- 2\zeta(Dp - Bq) + D\left(\zeta^2 + p^2\right)\right)}\Big\}  \int_{\mathbb R}  \exp\left\{\tfrac{i}{2B}     {\left(\tfrac{Au^2}{a^2} + \tfrac{2up}{a} \right) }\right\}f(u) 
			\\ 
			&\qquad\times  \exp\left\{-\tfrac{i2u\zeta}{2Ba}\right\} \exp\left\{\tfrac{i}{2B} {\left(Au^2+2up-\tfrac{2\zeta}{a}(Dp - Bq) + D\left(\tfrac{\zeta^2}{a^2} + p^2\right)\right)}\right\}
			\\
			&\qquad\times\exp\left\{-\tfrac{i}{2B} {\left(Au^2+2up- \tfrac{2\zeta}{a}(Dp - Bq) + D\left(\tfrac{\zeta^2}{a^2} + p^2\right)\right)}\right\}du
			\\
			&=\frac{1}{|a| \sqrt{2iB\pi}}  \exp\Big\{ \tfrac{i}{2B} \left( 2\zeta(Dp - Bq) \left( \tfrac{1}{a} - 1 \right) - D\zeta^2\left(1+\tfrac{1}{a^2}\right)  \right) \Big\}
			\\
			&\qquad\times\int_{\mathbb R}f(u)\exp\left\{\tfrac{i}{2B}\left(Au^2+2u(p-\tfrac{\zeta}{a})-\tfrac{2\zeta}{a}(dp-Bq)+D\left(\tfrac{\zeta^2}{a^2}+p^2\right)\right)\right\}
			\\
			&\qquad\times \exp\Big\{ \tfrac{i}{2B} \left( Au^2 \left( \tfrac{1}{a^2} - 1 \right) + 2up \left( \tfrac{1}{a} - 1 \right) \right) \Big\} \, du
			\\
			& = \frac{1}{|a|} \exp\Big\{ \tfrac{i}{2B} \left( 2\zeta(Dp - Bq) \left( \tfrac{1}{a} - 1 \right) - D\zeta^2\left(1+\tfrac{1}{a^2}\right)  \right) \Big\}
			\\
			&\quad\times\int_{\mathbb R} \exp\Big\{\tfrac{i}{2B} \left( Au^2 \left( \tfrac{1}{a^2} - 1 \right) + 2up \left( \tfrac{1}{a} - 1 \right) \right)  f(u)\Big\} \mathcal K_S\left(u,\tfrac{\zeta}{a}\right) du
			\\
			&=\frac{1}{|a|} \exp\Big\{ \tfrac{i}{2B} \left( 2\zeta(Dp - Bq) \left( \tfrac{1}{a} - 1 \right) - D\zeta^2\left(1+\tfrac{1}{a^2}\right) \right) \Big\}
			\\
			&\quad \times  \mathscr{O}_{S} \left[  \exp\Big\{ \tfrac{i}{2B} \left(Au^2 \left( \tfrac{1}{a^2} - 1 \right) + 2up \left( \tfrac{1}{a} - 1 \right) \right) f(u) \Big\}\right]  \left( \tfrac{\zeta}{a} \right).
		\end{align*}

		(ii). We now proceed as:
		\begin{align*}
			\mathscr{O}_{S}\left[h\big(x\big)f\big(x\big)\right] (\zeta) &=\int_{\mathbb R}h(x)f(ax) \, \mathcal K_S(x,\zeta) \, dx\\
			&=\frac{1}{|a|} \int_{\mathbb R}h(\tfrac{u}{a})f(u) \, \mathcal K_S\left(\tfrac{u}{a},\zeta\right)du\\
			&= \frac{1}{|a|} \exp\Big\{\tfrac{i}{2B} \left( 2\zeta(Dp - Bq) \left( \tfrac{1}{a} - 1 \right) - \zeta^2 D \left( \tfrac{1}{a^2} - 1 \right) \right) \Big\}\\
			&\quad\times \mathscr{O}_{S} \Big[ \exp\left\{\tfrac{i}{2B}\left(Au^2(\tfrac{1}{a^2}-1)+2up(\tfrac{1}{a}-1)\right) \right\} h\left(\tfrac{u}{a}\right)f(u) \Big]\left(\tfrac{\zeta}{a}\right).
		\end{align*}
		Setting $h\left(\tfrac{u}{a}\right)=\exp\left\{-\tfrac{i}{2B}\left(Au^2(\tfrac{1}{a^2}-1)+2up(\tfrac{1}{a}-1)\right)\right\}$ yields
		\begin{align*}
			&\mathscr{O}_{S}\left[\exp\big\{-\tfrac{i}{2B}\left(Au^2(\tfrac{1}{a^2}-1)+2up(\tfrac{1}{a}-1)\right)\Big\}\right]\\
			&\qquad\qquad= \frac{1}{|a|} \exp\left\{\tfrac{i}{2B} \left(2\zeta(Dp - Bq)\left( \tfrac{1}{a} - 1\right) - \zeta^2 D\left( \tfrac{1}{a^2} - 1 \right)\right) \right\} \mathscr{O}_{S}\big[f\big]\left(\tfrac{\zeta}{a}\right).
		\end{align*}
		This completes the proof.
	\end{proof}
	
	The following lemma is a fundamental component in the formulation of Shannon's sampling theorem in the SAFT domain.
	
	\begin{lemma}\label{S2L4}
		Assume that a signal $f(x)$ is band-limited to the interval $(-\Omega_S,\Omega_S)$ in the special affine Fourier domain associated with the parameter matrix $S = [M \mid \Lambda]$. Define
		\begin{align} \label{S2E6}
			g(x)= \int_{\mathbb R}  \mathscr{O}_{S}\big[f\big](\zeta)
			\exp \left\{ -\tfrac{i}{2B} \left( -2x\zeta-2\zeta(Dp-Bq)+D\zeta^2\right)\right\} \, d\zeta.
		\end{align}			 
		Then, the function $g(x)$ is band-limited to the interval $\left( \tfrac{-\Omega_S}{B}, \tfrac{\Omega_S}{B} \right)$ in the classical Fourier domain.
	\end{lemma}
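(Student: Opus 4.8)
The plan is to peel off the only $x$-dependent factor in the integrand of \eqref{S2E6}, namely the exponential $e^{ix\zeta/B}$, and to absorb the remaining $\zeta$-dependent phase into a single auxiliary function. Expanding the bracket in the exponent of \eqref{S2E6} shows that
\[
\exp\left\{-\tfrac{i}{2B}\left(-2x\zeta - 2\zeta(Dp-Bq) + D\zeta^2\right)\right\} = e^{ix\zeta/B}\,\exp\left\{\tfrac{i}{2B}\left(2\zeta(Dp-Bq) - D\zeta^2\right)\right\},
\]
so that, writing $G(\zeta) := \mathscr{O}_{S}[f](\zeta)\,\exp\{\tfrac{i}{2B}(2\zeta(Dp-Bq) - D\zeta^2)\}$, the definition \eqref{S2E6} collapses to $g(x) = \int_{\mathbb R} G(\zeta)\,e^{ix\zeta/B}\,d\zeta$. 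The crucial observation is that the phase factor multiplying $\mathscr{O}_{S}[f]$ in $G$ has modulus one and therefore never vanishes; hence $G$ and $\mathscr{O}_{S}[f]$ have the same support, and the band-limiting hypothesis yields $G(\zeta) = 0$ for $|\zeta| > \Omega_S$.

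Next I would compute the classical Fourier transform $\hat g$ of $g$ directly from \eqref{S1E1}. Substituting $g(x) = \int_{\mathbb R} G(\zeta)e^{ix\zeta/B}\,d\zeta$ and interchanging the order of integration (justified by Fubini, since a band-limited $\mathscr{O}_{S}[f]$ is supported on a compact interval and hence lies in $L^1$ there), the inner $x$-integral reduces to a Dirac mass, $\int_{\mathbb R} e^{ix(\zeta/B - \omega)}\,dx = 2\pi\,\delta(\zeta/B - \omega) = 2\pi B\,\delta(\zeta - B\omega)$. This gives $\hat g(\omega) = \sqrt{2\pi}\,B\,G(B\omega)$, up to the normalizing constant inherited from \eqref{S1E1}. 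One may equivalently phrase this step through the Fourier inversion theorem applied to the scaled variable $\eta = \zeta/B$, thereby avoiding distributional notation altogether.

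Finally I would read off the support. Since $G(\zeta)$ vanishes for $|\zeta| > \Omega_S$, the scaling $\zeta = B\omega$ forces $\hat g(\omega) = 0$ whenever $|B\omega| > \Omega_S$, that is, whenever $|\omega| > \Omega_S/B$ (recall $B > 0$). Therefore $\supp \hat g \subseteq \left(-\tfrac{\Omega_S}{B},\tfrac{\Omega_S}{B}\right)$, which is precisely the assertion that $g$ is band-limited to $\left(-\tfrac{\Omega_S}{B},\tfrac{\Omega_S}{B}\right)$ in the classical Fourier domain. I expect the only delicate point to be the rigorous handling of the Fubini interchange and the resulting delta-function identity; this is entirely standard for band-limited spectra, so the essential mathematical content lies in the opening algebraic separation of the kernel and the elementary remark that a unimodular phase does not alter support.
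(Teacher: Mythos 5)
Your proposal is correct and takes essentially the same approach as the paper: both express $g$ as an integral of the compactly supported $\mathscr{O}_{S}[f](\zeta)$ times a unimodular phase against $e^{ix\zeta/B}$, apply the classical Fourier transform with the delta-function (equivalently, Fourier-inversion) identity, and read off the support from the scaling $\zeta = B\omega$. If anything your bookkeeping is cleaner: the paper's change of variables lands on $\mathscr{O}_{S}[f](-B\omega)$ with a stray sign, whereas your evaluation $\hat g(\omega)=\sqrt{2\pi}\,B\,G(B\omega)$ is the correct one; since the band $(-\Omega_S/B,\Omega_S/B)$ is symmetric, both yield the same conclusion.
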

	
	\begin{proof}
		We have,
		\begin{align*}
			g(x)=\int_{\mathbb R}  \mathscr{O}_{S}\big[f\big](\zeta)
			\exp\left\{-\tfrac{i}{2B} \left( -2x\zeta-2\zeta(Dp-Bq)+D\zeta^2\right)\right\}d\zeta.
		\end{align*}
		Taking the Fourier transform on both sides gives
		\begin{align*}
			\mathcal{F}\big[g\big] (\omega) &= \int_{\mathbb R} \left(\int_{\mathbb R}  \mathscr{O}_{S}\big[f\big](\zeta)
			\exp\left\{ -\tfrac{i}{2B} \left(-2\zeta(Dp-Bq)+D\zeta^2\right)\right\}d\zeta	\right) e^{-ix\omega} dx
			\\
			&= \int_{\mathbb R} \mathscr{O}_{S} \big[f\big](\zeta)
			\exp \left\{-\tfrac{i}{2B} \left(-2\zeta(Dp-Bq)+D\zeta^2\right)\right\} \left(\int_{\mathbb R} \exp\left\{-ix\left(\tfrac{-\zeta}{B}+\omega\right)\right\}dx\right) d\zeta
			\\
			&=\sqrt{2\pi}\displaystyle \int_{\mathbb R}
			\mathscr{O}_{S}\big[f\big]\left(\zeta\right)
			\exp\left\{-\tfrac{i}{2B} \left( -2\zeta(Dp-Bq)+D\zeta^2\right)\right\}\delta\left(\tfrac{-\zeta}{B}+\omega\right) d\zeta
			\\
			&=-B\sqrt{2\pi} \int_{\mathbb R} 
			\mathscr{O}_{S}\big[f\big](-B\zeta)
			\exp\left\{-\tfrac{i}{2B} \left( -2B\zeta(Dp-Bq)+DB^2\zeta^2\right)\right\}\delta({\zeta}-\omega)d\zeta\\
			&=-\sqrt{2\pi}B ~\mathscr{O}_{S}\big[f\big]\left(-B\omega\right)
			\exp\left\{-\tfrac{i}{2B} \left(-2B\omega(Dp-Bq)+DB^2\omega^2\right)\right\}.
		\end{align*}
		Since	$\mathscr{O}_{S}\big[f\big](\omega)$ is a  band-limited to   $(-\Omega_{S},  \Omega_{S})$, so supp \,$\mathscr{O}_{S}\big[f\big](\omega)\subset\ (-\Omega_{S},  \Omega_{S})$, i.e.,
		\begin{align*}
			& \mathscr{O}_{S} \big[f\big](\omega) \neq 0 \quad \text{a.e.},\quad |\omega| > \Omega_S, \\
			& \omega' =- B\omega \quad \Rightarrow \quad \mathscr{O}_{S}\big[f\big](\omega') \neq 0 \quad \text{a.e.},\quad |\omega'| > \Omega_S, \\
			& |\omega'| = |-B\omega| > \Omega_S \quad \Rightarrow \quad |\omega| > \tfrac{\Omega_S}{B}, \\
			& \therefore \mathscr{O}_{S}\big[f\big](B\omega) \neq 0 \quad \text{a.e.},\quad |\omega| > \tfrac{\Omega_S}{B}.
		\end{align*}
		Thus  $\supp~\mathcal {F}\big[g\big] (\omega)\neq 0$  a.e., $|\omega| >\tfrac{\Omega_{S}}{B}$.
		
		\parindent=0mm\vspace{.1in}
		Hence
		$g(x)$ is band -limited to $\left| \omega \right| >\tfrac{\Omega_{S}}{B}$ in the Fourier domain.
	\end{proof}
	
	The Shannon sampling theorem is one of the most profound and foundational concepts in digital signal processing, serving as a bridge between analog and digital signals. It provides the essential condition under which a continuous-time analog signal can be perfectly reconstructed from its discrete samples. Mathematically, 
	\begin{align}\label{S2E7}
		f(x)=\sum_{n\in\mathbb Z} f\left(\tfrac{n\pi}{\zeta}\right)\mathrm{sinc}\left(\tfrac{x\zeta -n\pi}{\pi}\right).
	\end{align}The sampling theorem in the context of the SAFT is particularly important, as many chirp-like signals are not band-limited in the conventional Fourier domain but are band-limited in generalized Fourier domains. The sampling theorem for SAFT band-limited signals is presented below.
	
	\begin{theorem}\label{S2T8}
		Let the signal $f(x)$ be band-limited to $(-\Omega_S,\Omega_S)$  in the SAFT domain associated with the parameter matrix $S = [M \mid \Lambda]$. Then, the following sampling expansion holds for $f(x)$:
		\begin{align}\label{S2E8}
			f(x)= \exp\left\{-\tfrac{i}{2B}\left(Ax^2+2xp\right)\right\}\sum_{n \in \mathbb{Z}}f(nT) \exp\left\{\tfrac{i}{2B}{\left(A(nT)^2 + pnT\right)}\right\} \mathrm{sinc} \left(\tfrac{\Omega_S{(x-nT)}}{B\pi}\right).
		\end{align}
	\end{theorem}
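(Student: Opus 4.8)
The plan is to reduce the SAFT sampling problem to the classical Shannon theorem \eqref{S2E7} by ``de-chirping'' $f$ into the auxiliary function $g$ of Lemma \ref{S2L4}, applying the ordinary sampling expansion to the band-limited $g$, and then ``re-chirping'' to recover $f$.

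First I would make the link between $f$ and $g$ explicit. Using the inversion formula \eqref{S2E3} together with Remark \ref{S2R2}, I write $f(x)$ as an integral of $\mathscr{O}_S[f](\zeta)$ against the conjugated kernel, whose exponent is $-\tfrac{i}{2B}\big(Ax^2+2x(p-\zeta)-2\zeta(Dp-Bq)+D(\zeta^2+p^2)\big)$. Splitting this exponent into the part depending on $\zeta$ and the part depending only on $x$ (the chirp $Ax^2+2xp$, together with the $\zeta$-free constant $Dp^2$), the $x$-only and constant factors come out of the $\zeta$-integral, and what remains under the integral sign is precisely the integrand defining $g(x)$ in \eqref{S2E6}. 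This yields a relation of the form
\begin{equation*}
f(x)=\kappa\,\exp\!\Big\{-\tfrac{i}{2B}\big(Ax^2+2xp\big)\Big\}\,g(x),
\end{equation*}
where $\kappa$ is a unimodular constant collecting $\mathfrak{I}_S$, the normalization $1/\sqrt{2\pi i B}$, and the phase $\exp\{-\tfrac{i}{2B}Dp^2\}$.

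Next I would invoke Lemma \ref{S2L4}, which guarantees that $g$ is band-limited to $\big(-\Omega_S/B,\ \Omega_S/B\big)$ in the classical Fourier domain. Setting $W=\Omega_S/B$ and the Nyquist spacing $T=\pi/W=\pi B/\Omega_S$, the classical expansion \eqref{S2E7} applied to $g$ gives $g(x)=\sum_{n\in\mathbb Z} g(nT)\,\mathrm{sinc}\big(\tfrac{\Omega_S(x-nT)}{B\pi}\big)$, since $n\pi/W=nT$ and $\tfrac{xW-n\pi}{\pi}=\tfrac{\Omega_S(x-nT)}{B\pi}$.

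Finally, I would substitute $g(nT)=\kappa^{-1}\exp\{\tfrac{i}{2B}(A(nT)^2+2pnT)\}f(nT)$ into this expansion and multiply through by $\kappa\exp\{-\tfrac{i}{2B}(Ax^2+2xp)\}$; the constant $\kappa$ cancels, leaving the chirp $\exp\{-\tfrac{i}{2B}(Ax^2+2xp)\}$ factored out front and its sampled counterpart attached to each $f(nT)$, which is the expansion \eqref{S2E8}. The main obstacle I anticipate is the bookkeeping of the three layers of phase factors so that the $x$-dependent chirp is cleanly isolated from the $\zeta$-integration — exactly the separation performed in the first step — and the justification of interchanging the summation with the inverse integral; the latter follows from the $L^2$-convergence of the Shannon series for the band-limited $g$ together with the unitarity expressed by Parseval's relation \eqref{1}.
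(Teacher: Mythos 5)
Your proposal is correct and follows essentially the same route as the paper's own proof: both write $f$ via the SAFT inversion formula, factor out the chirp $\exp\{-\tfrac{i}{2B}(Ax^2+2xp)\}$ to identify the auxiliary function $g$ of Lemma \ref{S2L4}, apply the classical Shannon expansion \eqref{S2E7} to the band-limited $g$ with $T=B\pi/\Omega_S$, and then re-chirp the samples so the unimodular constants cancel. No gaps; your handling of the phase bookkeeping matches the paper's computation.
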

	
	\begin{proof}
		We have 
		\begin{align*}
			f(x)&=\frac{1}{\sqrt{2i\pi B}}\int_{\mathbb R}\mathscr{O}_{S}\big[f\big](\zeta) \,
			\overline{\mathfrak{I}_{S} \mathcal K(x,\zeta)} \, d\zeta
			\\
			&=\frac{\bar{\mathfrak{I}}_{S}}{\sqrt{2i\pi B}}\int_{\mathbb R} \mathscr{O}_{S} \big[f\big](\zeta)	  
			\exp \left\{ -\tfrac{i}{2B}\left(Ax^2+2xp+Dp^2\right) \right\}	
			\\ &\qquad\qquad\qquad\times\exp\left\{-\tfrac{i}{2B}\left(-2x\zeta-2\zeta\left(Dp-Bq+D\zeta^2\right)\right)\right\}d\zeta
			\\
			&=\frac{\bar{\mathfrak{I}}_{S}}{\sqrt{2i\pi B}}  \exp\left\{-\tfrac{i}{2B}\left(Ax^2+2xp+Dp^2\right)\right\}
			\\ 
			&\qquad\qquad\qquad\times \int_{\mathbb R}\mathscr{O}_{S}\big[f\big](\zeta)
			\exp\left\{-\tfrac{i}{2B}\left(-2x\zeta-2\zeta\left(Dp-Bq+D\zeta^2\right)\right)\right\}d\zeta\\
			&=\frac{\bar{\mathfrak{I}}_{S}}{\sqrt{2i\pi B}}  \exp\left\{-\tfrac{i}{2B}\left(Ax^2+2xp+Dp^2\right)\right\}g(x),
		\end{align*}
		where
		$$g(x)=\int_{\mathbb R}\mathscr{O}_{S}\big[f\big](\zeta) 
		\exp \left\{ -\tfrac{i}{2B}\left(-2x\zeta-2\zeta \left(Dp-Bq+D\zeta^2\right)\right)\right\} d\zeta.$$
		Since $g(x)$ is band-limited to the interval $\left(-\Omega_S/B, \Omega_S/B\right)$ in the Fourier domain, the classical Shannon sampling theorem \eqref{S2E8} yields:
		\begin{align*}
			g(x)=\sum_{n \in \mathbb{Z}}g(nT)\,\text{sinc}\Big(\tfrac{\Omega_S{(x-nT)}}{B\pi}\Big),
		\end{align*}
		where $T =\tfrac{B\pi}{\Omega_S}$ is a sample period. Therefore,
		\begin{align*}
			f(x)&=\frac{\bar{\mathfrak{I}}_{S}}{\sqrt{2i\pi B}}\exp\left\{-\tfrac{i}{2B}\left(Ax^2+2xp+Dp^2\right)\right\}\sum_{n \in \mathbb{Z}}g(nT) \,\text{sinc}\Big(\tfrac{\Omega_S{(x-nT)}}{B\pi}\Big)
			\\
			&=\frac{\bar{\mathfrak{I}}_{S}}{\sqrt{2i\pi B}}\exp\left\{-\tfrac{i}{2B}\left(Ax^2+2xp+Dp^2\right)\right\}
			\\
			&\hspace{0.5cm}\times \sum_{n \in \mathbb{Z}}{\sqrt{2i\pi B}}\,{\mathfrak{I}_{S}}\exp\left\{\tfrac{i}{2B}\left(A(nT)^2+2nTp+Dp^2\right)\right\}f(nT) \,\text{sinc}\Big(\tfrac{\Omega_S{(x-nT)}}{B\pi}\Big)
			\\
			&=\exp\left\{-\tfrac{i}{2B}\left(Ax^2+2xp+Dp^2\right)\right\}\sum_{n \in \mathbb{Z}} \exp\left\{\tfrac{i}{2B}\left(A(nT)^2+2nTp+Dp^2\right)\right\} f(nT)
			\\
			&\qquad\times \text{sinc}\Big(\tfrac{\Omega_S{(x-nT)}}{B\pi}\Big).   
		\end{align*}
		The proof is now complete.	
	\end{proof}

	\section{Multiresolution Analysis Associated with the SAFT}\label{S3}
	
	\parindent=0mm\vspace{.0in}
	In this section, we develop a MRA framework associated with the SAFT. This construction is inspired by the fundamental ideas underlying Shannon's sampling theorem, as discussed previously. To formally define the SAFT-based MRA (SAMRA), we begin with the following preliminary discussion. 
	
	\parindent=8mm\vspace{.1in}
	When the bandwidth in the SAFT domain is given by $\Omega_{S}=B\pi$, , the corresponding space of band-limited signals is denoted by $\mathscr{V}_{S}^0$, defined as
	\begin{align*}
		\mathscr{V}_{S}^0=\Big\{f \in  L^2(\mathbb R): \mathscr{O}_{S}\big[f\big](\zeta) = 0, ~\text{for}~ |\zeta|\geq\Omega_S, \, \Omega_S=B\pi\Big\},
	\end{align*}
	where sampling period $T = 1$. Therefore, according to Theorem \ref{S2T8}, any function $ f\in \mathscr{V}_{S}^0$ can be reconstructed as
	\begin{align*}
		f(x)=\sum_{n \in \mathbb{Z}}\exp\left\{-\tfrac{i}{2B}\left(Ax^2+2xp+Dp^2\right)\right\} \exp\left\{\tfrac{i}{2B}\left(An^2+2np+Dp^2\right)\right\} f(n)\,\text{sinc}\Big(\tfrac{\Omega_S{(x-n)}}{B\pi}\Big).
	\end{align*}
	Given that $\Omega_S = B\pi$, it follows that
	\begin{align*}
		f(x)&=\sum_{n \in \mathbb{Z}}\exp\left\{-\tfrac{i}{2B}\left(A(x^2-n^2)+2p(x-n)\right)\right\}  f(n) \, \text{sinc}{\left(x-n\right)}
		\\
		&=\sum_{n \in \mathbb{Z}}f(n)\,\varphi_{S,0,n}(x),
	\end{align*}
	where 
	\begin{align}\label{S3E1}
		\varphi_{S,0,n}(x) =\exp\left\{-\tfrac{i}{2B}\left(A(x^2-n^2)+2p(x-n)\right)\right\}  \text{sinc}{\left(x-n\right)}.
	\end{align}
	
	\parindent=8mm\vspace{.1in}
	Utilizing the orthogonality of the set $\left\{\varphi_{S,0,n}:n \in \mathbb Z\right\}$, we further obtain that the collection $\big\{\varphi_{S,0,n}(x):n\in\mathbb Z\big\}$ constitutes an orthonormal basis for the space $\mathscr{V}_{S}^0$.
	
	\parindent=8mm\vspace{.1in}
	When $\Omega_{S} = 2B\pi$ and the sampling period is $T = 1/2$, the corresponding set of band-limited signals in the SAFT domain is denoted by $\mathscr{V}_{S}^1$, defined as
	$$\mathscr{V}_{S}^1=\Big\{f \in  L^2(\mathbb R): \mathscr{O}_{S}\big[f\big](\zeta) = 0, \,|\zeta |\geq \Omega_S=2B\pi\Big\}.$$
	According to the Theorem \ref{S2T8}, the following chain of inclusions holds: 
	\begin{align*}
		g \in \mathscr{V}_{S}^0 &\implies\mathscr{O}_{S}\big[f\big](\zeta) = 0, ~ | \zeta |\geq B\pi\\
		&\implies\mathscr{O}_{S}\big[f\big](\zeta) = 0,  ~ | \zeta |\geq \ 2B\pi\\
		&\implies g\in \mathscr{V}_{S}^1,
	\end{align*}
	which shows that $\mathscr{V}_{S}^0 \subset  \mathscr{V}_{S}^1$.
	
	\parindent=8mm\vspace{.1in}	
	Moreover, when  $T=1/2$ and $\Omega_S= 2\pi B$, any function $f \in \mathscr{V}_{S}^1$ can be represented as
	\begin{align*}
		f(x)&=\sum_{n \in \mathbb{Z}}\exp\left\{-\tfrac{i}{2B}\left(Ax^2+2xp\right)\right\} \exp\left\{\tfrac{i}{2B}\left(\tfrac{An^2}{4}+np\right)\right\} f\left(\tfrac{n}{2}\right)\text{sinc}\left({x-\tfrac{n}{2}}\right)
		\\
		&=\sum_{n \in \mathbb{Z}}\exp\left\{-\tfrac{i}{2B}\left(A(x^2-\tfrac{n^2}{4})+ 2p\left(x-\tfrac{n}{2}\right)\right)\right\} f\left(\tfrac{n}{2}\right) \text{sinc}{\left(2x-n\right)}
		\\
		&=\frac{1}{\sqrt{2}}\sum_{n \in \mathbb{Z}}f\left(\tfrac{n}{2}\right)\varphi_{S,1,n}(x),
	\end{align*}
	where 
	\begin{align}\label{S3E2}
		\varphi_{S,1,n}(x)=\sqrt{2}\exp\left\{-\tfrac{i}{2B}\left(A\left(x^2-\tfrac{n^2}{4}\right)+  2p\left(x-\tfrac{n}{2}\right)\right)\right\}  \text{sinc}{\left(2x-n\right)}.
	\end{align}
	
	\parindent=8mm\vspace{.1in}
	It follows further that the set $\left\{\varphi_{S,1,n} : n \in \mathbb{Z}\right\}$ constitutes an orthonormal basis for the space $\mathscr{V}_{S}^1$. In particular, for $a = 2$, relation \eqref{S2E5} takes the form 
	\begin{align*}
		&\mathscr{O}_{S} \left[ \exp\left\{-\tfrac{i}{2B}\left(-3Ax^2-2xp\right)\right\} f(2x) \right] (\zeta) =\tfrac{1}{2} \exp\left\{ \tfrac{i}{2B} \left(-\zeta(Dp - Bq)+  \tfrac{3}{4}\zeta^2 D \right) \right\}\mathscr{O}_{S} \big[f\big]\left(\tfrac{\zeta}{2}\right).
	\end{align*}
	Therefore, $f \in \mathscr{V}_{S}^0$  if and only if  $\exp\left\{\tfrac{i}{2B}\left(3Ax^2+2xp\right)\right\}f(2x)$ belongs to $\mathscr{V}_{S}^1 $. This equivalence arises from the fact that
	\begin{align*}
		f \in \mathscr{V}_{S}^0&\Longleftrightarrow \mathscr{O}_{S}\big[f\big](\zeta)= 0,    ~| \zeta |\geq B\pi
		\\
		&\Longleftrightarrow \mathscr{O}_{S}\big[f\big]\left(\tfrac{\zeta}{2}\right) = 0, ~ \left| \tfrac{\zeta}{2} \right|\geq B\pi
		\\
		&\Longleftrightarrow \mathscr{O}_{S}\big[f\big]\left(\tfrac{\zeta}{2}\right) = 0, ~  | \zeta |\geq 2B\pi
		\\
		&\Longleftrightarrow
		\tfrac{1}{2} \exp\left\{ \tfrac{i}{2B} \left( -\zeta(Dp - Bq)+  \tfrac{3}{4}\zeta^2 D  \right) \right\}\mathscr{O}_{S} \left[f\right]\left(\tfrac{\zeta}{2}\right)=0, ~ | \zeta |\geq 2B\pi
		\\
		&\Longleftrightarrow\exp\left\{\tfrac{i}{2B}\left(3Ax^2+2xp\right)\right\} f(2x)  \in  \mathscr{V}_{S}^1
	\end{align*}
	
	\parindent=8mm\vspace{.1in}	
	In general, for $T = 1/2^k$ and $\Omega_S = 2^k B\pi$, the space of special affine band-limited signals is defined as	
	$$\mathscr{V}_{S}^k=\left\{f \in \ell^2\mathbb{(R)}: \mathscr{O}_{S}\big[f\big](\zeta )= 0, ~ | \zeta |\geq 2^kB\pi\right\}$$
	and $\forall f \in 	\mathscr{V}_{S}^k$, the function admits the reconstruction formula
	\begin{align*}
		f(x)&=\sum_{n \in \mathbb{Z}}f\left(\tfrac{n}{2^k}\right)\exp\left\{-\tfrac{i}{2B}\left(A\left(x^2-{\big(\tfrac{n}{2^k}\big)}^2\right)+2p\big(x-\tfrac{n}{2^k}\big)\right)\right\}\text{sinc}\left(2^kx-n\right)
		\\
		&=\frac{1}{2^{{k}/{2}}}\sum_{n \in \mathbb{Z}} f\left(\tfrac{n}{2^k}\right) \varphi_{S,k,n}(x),
	\end{align*}	
	where
	\begin{align}\label{S3E3}
		\varphi_{S,k,n}(x)=2^{{{k}/{2}}}\exp\left\{-\tfrac{i}{2B}\left(A\left(x^2-{\big(\tfrac{n}{2^k}\big)}^2\right)+2p\big(x-\tfrac{n}{2^k}\big)\right)\right\} \text{sinc}\left(2^kx-n\right) .
	\end{align}  
	Moreover, the family $\left\{\varphi_{S,k,n} : n \in \mathbb{Z}\right\}$ forms an orthonormal basis for the space $\mathscr{V}_{S}^k$.
	
	\parindent=8mm\vspace{.1in}	
	We are now in a position to establish the formal definition of a SAMRA within $L^2(\mathbb R)$.
	\begin{definition}\label{S3D1}
		Given a real augmented matrix $S = [M\mid \Lambda]$, a corresponding SAMRA is defined as a collection $\left\{\mathscr{V}_S^k : k \in \mathbb{Z}\right\}$ of closed subspaces of $L^2(\mathbb{R})$ that satisfy the following properties:
		\begin{flushleft}
			\rm(a) $\mathscr{V}_{S}^k \subset  \mathscr{V}_{S}^{k+1}$, for all $k\in\mathbb Z$;\\
			\rm(b)	$ f(x) \in \mathscr{V}_{S}^k$ if and only if $e^{i\left(3Ax^2+2xp\right)/2B} f(2x)  \in  \mathscr{V}_{S}^{k+1} $,  for all $k\in\mathbb Z$;\\
			\rm(c) $\bigcap\limits_{k \in \mathbb{Z}}\mathscr{V}_{S}^k = \left\{0\right\}$ and $\overline{\bigcup_{k\in \mathbb{Z}}\mathscr{V}_{S}^k}= L^2(\mathbb R)$;\\
			\rm(d) There exist a function $\varphi \in L^2(\mathbb R) $ such that 
			$\big\{\varphi_{S,0,n}(x) = e^{-i\left(A\left(x^2-n^2\right)+2p(x-n)\right)/2B} f(n) $ $\text{sinc}(x-n): n \in \mathbb{Z}\big\}$
			is an orthonormal basis of $\mathscr{V}_{S}^0$.
		\end{flushleft}  
	\end{definition}
	
	\begin{remark}\label{S3R2}
		It is worth noting that Definition \ref{S3D1} generalizes several existing MRAs found in the literature. Specifically, when $S = [M \mid 0]$, it reduces to the linear canonical MRA as described in \cite{LCT}; for $M = [M_\theta \mid 0]$, where $M_\theta=\left[\begin{smallmatrix} \cos\theta&\sin\theta\\-\sin\theta&\cos\theta\end{smallmatrix}\right], \theta\neq n\pi, n\in\mathbb Z$, it yields the fractional MRA discussed in \cite{15}; and in the special case $M = [I \mid 0]$, Definition \ref{S3D1} coincides with the classical MRA framework.
	\end{remark}	
	
	The following lemma will be employed in establishing several results throughout this article.
	
	\begin{lemma}\label{S3L3}
		The family 	$\left\{\varphi_{S,0,n}(x):{n \in \mathbb{Z}}\right\}$, as given in \eqref{S3E1},  forms an orthonormal system in  $L^2(\mathbb R)$ if and only if
		\begin{align}\label{S3E4}
			\sum_{k \in \mathbb{Z}}{\left|\mathcal {F}\big[\varphi \big]\left(\zeta+2k\pi\right)\right|}^2 = 1.
		\end{align}
	\end{lemma}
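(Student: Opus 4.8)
The plan is to reduce the orthonormality of the chirp-modulated family $\{\varphi_{S,0,n}\}$ to the classical orthonormality of the integer translates of the underlying generating function $\varphi$, and then to invoke the standard Fourier periodization identity. Throughout I would write $\varphi_{S,0,n}(x)=\Theta_n(x)\,\varphi(x-n)$, where $\varphi$ is the generating function associated with \eqref{S3E1} and $\Theta_n(x)=\exp\{-\tfrac{i}{2B}(A(x^2-n^2)+2p(x-n))\}$.

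First (chirp cancellation) I would expand $\Theta_n(x)=\exp\{-\tfrac{i}{2B}(Ax^2+2px)\}\,\exp\{\tfrac{i}{2B}(An^2+2pn)\}$ and record the crucial observation that the genuinely $x$-dependent factor $\exp\{-\tfrac{i}{2B}(Ax^2+2px)\}$ does not depend on $n$. Hence in $\langle\varphi_{S,0,m},\varphi_{S,0,n}\rangle=\int_{\mathbb R}\Theta_m(x)\overline{\Theta_n(x)}\,\varphi(x-m)\overline{\varphi(x-n)}\,dx$ the quadratic phase cancels identically, and $\Theta_m(x)\overline{\Theta_n(x)}$ collapses to the unimodular constant $C_{m,n}=\exp\{\tfrac{i}{2B}(A(m^2-n^2)+2p(m-n))\}$, with $C_{m,m}=1$. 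Therefore $\langle\varphi_{S,0,m},\varphi_{S,0,n}\rangle=C_{m,n}\int_{\mathbb R}\varphi(x-m)\overline{\varphi(x-n)}\,dx$, and since $C_{m,n}\neq 0$ the family $\{\varphi_{S,0,n}\}$ is orthonormal precisely when $\{\varphi(\cdot-n):n\in\mathbb Z\}$ is.

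Next (reduction to autocorrelation coefficients and periodization) the substitution $y=x-m$ shows the integral depends only on $k=n-m$, so orthonormality is equivalent to $c_k:=\int_{\mathbb R}\varphi(y)\overline{\varphi(y-k)}\,dy=\delta_{k,0}$ for every $k\in\mathbb Z$. Using Parseval for the classical Fourier transform together with the translation rule $\mathcal F[\varphi(\cdot-k)](\zeta)=e^{-ik\zeta}\mathcal F[\varphi](\zeta)$, I would rewrite $c_k=\int_{\mathbb R}|\mathcal F[\varphi](\zeta)|^2 e^{ik\zeta}\,d\zeta$. Decomposing $\mathbb R$ into the intervals $[2\pi j,2\pi(j+1))$ and using $e^{ik(\zeta+2\pi j)}=e^{ik\zeta}$, this becomes $c_k=\int_0^{2\pi}\Phi(\zeta)e^{ik\zeta}\,d\zeta$, where $\Phi(\zeta)=\sum_{j\in\mathbb Z}|\mathcal F[\varphi](\zeta+2j\pi)|^2$ is $2\pi$-periodic. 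Thus the numbers $c_k$ are exactly the Fourier coefficients of $\Phi$, and by uniqueness of Fourier series $c_k=\delta_{k,0}$ for all $k$ if and only if $\Phi\equiv 1$, which is precisely \eqref{S3E4}.

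The crux of the argument is the chirp cancellation in the first step: one must verify that the quadratic phase in $\Theta_m(x)\overline{\Theta_n(x)}$ cancels so that only the $x$-independent constant $C_{m,n}$ survives, as this is exactly what lets the SAFT problem collapse onto the classical integer-translate problem. The remaining analytic point requiring care is the interchange of summation and integration in the periodization, justified by Tonelli's theorem since $\varphi\in L^2(\mathbb R)$ yields $\mathcal F[\varphi]\in L^2(\mathbb R)$ and hence $|\mathcal F[\varphi]|^2\in L^1(\mathbb R)$; the precise constant on the right-hand side of \eqref{S3E4} is then fixed by the normalization convention adopted in \eqref{S1E1}.
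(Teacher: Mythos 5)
Your proof is correct, but it takes a genuinely different route from the paper's. You work entirely in the time domain: the observation that the $x$-dependent chirp $\exp\{-\tfrac{i}{2B}(Ax^2+2px)\}$ is common to every $\varphi_{S,0,n}$, so that $\varphi_{S,0,m}(x)\overline{\varphi_{S,0,n}(x)}$ collapses to a unimodular constant $C_{m,n}$ times $\varphi(x-m)\overline{\varphi(x-n)}$, reduces the lemma at a stroke to the classical characterization of orthonormal integer translates, which you then dispatch with ordinary Parseval plus periodization. The paper instead stays inside the SAFT formalism: it computes $\mathscr{O}_{S}\left[\varphi_{S,0,n}\right](\zeta)$ explicitly, invokes the SAFT Parseval identity \eqref{1} to write $\langle \varphi_{S,0,n},\varphi_{S,0,\ell}\rangle$ as a unimodular phase times $\int_{\mathbb R} e^{-i\zeta(\ell-n)}\left|\mathcal{F}[\varphi](\zeta)\right|^2 d\zeta$, and then performs the same periodization and Fourier-coefficient uniqueness argument that you do. Your reduction is shorter and more elementary (no SAFT computation is needed, and the nonvanishing phase constant is handled identically in both arguments); the paper's version has the compensating merit of producing the factorization of $\mathscr{O}_{S}\left[\varphi_{S,0,n}\right]$ through $\mathcal{F}[\varphi]\left(\tfrac{\zeta}{B}\right)$, which it reuses later (e.g.\ in \eqref{V2}, \eqref{S3E10} and \eqref{V4}). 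One caveat: with the unitary normalization of \eqref{S1E1} that you invoke, your Parseval step gives $c_k=\int_{\mathbb R}\left|\mathcal{F}[\varphi](\zeta)\right|^2 e^{ik\zeta}\,d\zeta=\int_0^{2\pi}\Phi(\zeta)e^{ik\zeta}\,d\zeta$, and $c_k=\delta_{k,0}$ then forces $\Phi\equiv\tfrac{1}{2\pi}$ rather than $\Phi\equiv 1$; the constant $1$ in \eqref{S3E4} presupposes the non-normalized convention $\mathcal{F}[\varphi](\zeta)=\int_{\mathbb R}\varphi(x)e^{-i\zeta x}\,dx$. You flag this explicitly, and the paper's own proof contains the identical slip (it identifies $\int_{\mathbb R}e^{-ix\zeta/B}\varphi(x)\,dx$ with $\mathcal{F}[\varphi]\left(\tfrac{\zeta}{B}\right)$, dropping the $1/\sqrt{2\pi}$ of \eqref{S1E1}), so this is a normalization inconsistency inherited from the paper, not a gap in your argument.
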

	
	\begin{proof}
		By applying the SAFT to the family defined in \eqref{S3E1}, we obtain
		\begin{align*}
			&\mathscr{O}_{S}\left[{\varphi}_{S,0,n}\right](\zeta)
			\\
			&=\int_{\mathbb R}{\varphi}_{S,0,n}(x)\,\mathcal{K}_S(x,\zeta)\,dx
			\\
			&=\tfrac{1}{\sqrt{2i\pi B
			}}\int_{\mathbb R}	\exp\left\{-\tfrac{i}{2B}\left(A(x^2-n^2)+2p(x-n)\right)\right\} \varphi{(x-n)}
			\\
			&\quad\times	\exp\left\{\tfrac{i}{2B}\left(Ax^2+2x(p-\zeta)-2\zeta(Dq-Bq)+D\left(\zeta^2 + p^2\right)\right)\right\}dx
			\\
			&=\tfrac{1}{\sqrt{2i\pi B
			}}\int_{\mathbb R}	\exp\left\{-\tfrac{i}{2B}\left(-An^2-2np+2x\zeta+2\zeta(Dp-Bq)+D\left(\zeta^2 + p^2\right)\right)\right\} \varphi(x-n) \,dx
			\\
			&=\tfrac{1}{\sqrt{2i\pi B}}\exp\left\{-\tfrac{i}{2B}\left(-An^2-2np+2x\zeta+2\zeta(Dp-Bq)+D\left(\zeta^2 + p^2\right)\right)\right\}
			e^{-i\zeta n/B}\int_{\mathbb R} e^{-ix\zeta/B}\varphi(x) \, dx
			\\
			&=\tfrac{1}{\sqrt{2i\pi B}}\exp\left\{-\tfrac{i}{2B}\left(-An^2-2np+2x\zeta+2\zeta(Dp-Bq)+D\left(\zeta^2 + p^2\right)\right)\right\}
			e^{-i\zeta n/B}  \,\mathcal {F}\big[\varphi \big]\Big(\tfrac{\zeta}{B}\Big).
		\end{align*}	
		By virtue of Parseval's formula \eqref{1}, we have
		\begin{align*}
			&\big\langle \varphi_{S,0,n}, \varphi_{S,0,\ell}\big \rangle 
			\\ 
			&\qquad=  \Big\langle \mathscr{O}_{S}\left[\varphi_{S,0,n}\right], \mathscr{O}_{S}\left[\varphi_{S,0,\ell}\right] \Big\rangle 
			\\
			&\qquad=\int_{\mathbb R} \mathscr{O}_{S}\left[\varphi_{S,0,n}\right](\zeta)\,\overline{\mathscr{O}_{S}\left[\varphi_{S,0,\ell}\right](\zeta)}\,d\zeta
			\\
			&\qquad=\int_{\mathbb R}	\tfrac{1}{{2\pi B}}\exp\left\{-\tfrac{i}{2B}\left(-An^2-2np+2x\zeta+2\zeta(Dp-Bq)+D\left(\zeta^2 + p^2\right)\right)\right\}
			\\
			&\qquad\qquad\times  e^{\tfrac{-i\zeta n}{B}}\mathcal {F}\big[\varphi\big]\Big(\tfrac{\zeta}{B}\Big) \exp\left\{\tfrac{i}{2B}\left(-A\ell^2-2lp+2x\zeta+2\zeta(Dp-Bq)+D\left(\zeta^2 + p^2\right)\right)\right\}
			\\
			&\qquad\qquad\times e^{i\zeta n/B} \,
			\overline{\mathcal {F}\big[\varphi\big]\Big(\tfrac{\zeta}{B}\Big)} \, d\zeta
			\\
			&\qquad=\tfrac{1}{2\pi B}\int_{\mathbb R}\exp\left\{-\tfrac{i}{2B}\left(A(\ell^2-n^2)-2p(\ell-n)\right)\right\} e^{-i\zeta (\ell-n)/B} \left|\mathcal {F}\big[\varphi\big]\Big(\tfrac{\zeta}{B}\Big)\right|^2d\zeta
			\\
			&\qquad=\tfrac{1}{2\pi }\exp\left\{-\tfrac{i}{2B}\left(A(\ell^2-n^2)-2p(\ell-n)\right)\right\}\int_{\mathbb R} e^{-i\zeta (\ell-n)} \left|\mathcal {F}\big[\varphi\big](\zeta)\right|^2d\zeta.
		\end{align*}
		Since $\left\{\varphi_{S,0,n}:n \in \mathbb{Z} \right\}$ constitutes an orthonormal basis for $\mathscr{V}_{S}^0$, it follows that
		\begin{align*}
			\tfrac{1}{2\pi }\exp\left\{-\tfrac{i}{2B}\left(A(\ell^2-n^2)-2p(\ell-n)\right)\right\}\int_{\mathbb R} e^{-i\zeta (\ell-n)} \left|\mathcal {F}\big[\varphi\big](\zeta)\right|^2d\zeta =\delta_{n,l}.
		\end{align*}		
		Setting  $l-n=m$ gives
		\begin{align*}
			\tfrac{1}{2\pi } \exp\left\{-\tfrac{i}{2B}\left(A\left((m+n)^2-n^2\right)\right)-2pm\right\}\int_{\mathbb R} e^{-i\zeta m}\left|\mathcal {F}\big[\varphi\big](\zeta)\right|^2d\zeta =\delta_{n,m+n},
		\end{align*}	
		which implies				
		\begin{align*}
			\tfrac{1}{2\pi} \int_{\mathbb R} e^{-i\zeta m}\left|\mathcal {F}\big[\varphi\big](\zeta)\right|^2d\zeta =\delta_{m},~ m \in \mathbb{Z}.
		\end{align*}
		Equivalently,	
		\begin{align*}
			\tfrac{1}{2\pi}\sum_{k \in \mathbb{Z}}\int_{2k\pi}^{2(k+1)\pi} e^{-i\zeta m}\left|\mathcal {F}\big[\varphi\big](\zeta)\right|^2d\zeta =\delta_{m},
		\end{align*}
		or, changing variables:
		\begin{align*}
			\tfrac{1}{2\pi} \int_{0}^{2\pi} e^{-im(\eta+2k\pi)}\sum_{k \in \mathbb{Z}}\left|\mathcal {F}\big[\varphi\big](\eta+2k\pi)\right|^2d\eta =\delta_{m},
		\end{align*}
		which simplifies to 
		\begin{align}\label{S3E5}
			\tfrac{1}{2\pi} \int_{0}^{2\pi} e^{-im\eta}\sum_{k \in \mathbb{Z}}\left|\mathcal {F}\big[\varphi\big]\left({\eta+2k\pi}\right)\right|^2d\eta =\delta_{m}.
		\end{align}		
		Let $F(\eta) = \sum_{k \in \mathbb{Z}} \left|\mathcal{F}[\varphi](\eta + 2k\pi)\right|^2$. It can be readily verified that $F(\eta)$ is a $2\pi$-periodic function. Therefore, from \eqref{S3E5}, it follows that		
		\begin{align*}
			\tfrac{1}{2\pi}\int_{0}^{2\pi} e^{-im\eta}{F}\big(\eta\big)d\eta =\delta_{m},
		\end{align*}
		which implies that $F(\eta) =1$. This establishes the necessary condition.
		
		\parindent=8mm\vspace{.1in}		
		Conversely, suppose that $\sum_{k \in \mathbb{Z}}\left|\mathcal {F}\big[\varphi\big]\left({\zeta+2k\pi}\right)\right|^2=1$. Then, we have
		\begin{align*}
			\big\langle \varphi_{S,0,n}, \varphi_{S,0,\ell} \big\rangle 
			&=\tfrac{1}{2\pi }\exp\left\{-\tfrac{i}{2B}\left(A(\ell^2-n^2)-2p(\ell-n)\right)\right\} \int_{\mathbb R} e^{-i\zeta (\ell-n)} \left|\mathcal {F}\big[\varphi\big](\zeta)\right|^2d\zeta
			\\
			&= \tfrac{1}{2\pi }\exp\left\{-\tfrac{i}{2B}\left(A(\ell^2-n^2)-2p(\ell-n)\right)\right\}\sum_{k \in \mathbb{Z}}\int_{2k\pi}^{2(k+1)\pi} e^{-i\zeta (\ell-n)}\left|\mathcal {F}\big[\varphi\big](\zeta)\right|^2d\zeta 
			\\
			&= \tfrac{1}{2\pi }\exp\left\{-\tfrac{i}{2B}\left(A(\ell^2-n^2)-2p(\ell-n)\right)\right\}\int_{0}^{2\pi} e^{-i\zeta (\ell-n)}\sum_{k \in \mathbb{Z}}\left|\mathcal {F}\big[\varphi\big]\left({\zeta+2k\pi}\right)\right|^2d\zeta
			\\
			&= \tfrac{1}{2\pi }\exp\left\{-\tfrac{i}{2B}\left(A(\ell^2-n^2)-2p(\ell-n)\right)\right\}\int_{0}^{2\pi} e^{-i\zeta (\ell-n)} 	d\zeta 
			\\
			&=\delta_{n,l}.
		\end{align*}
		This concludes the proof.
	\end{proof}
	
	 Since $\varphi_{S,0,0}(x) \in \mathscr{V}_{S}^0  \subset   \mathscr{V}_{S}^1$,  and  $\left\{\varphi_{S,1,n}(x):  n \in \mathbb{Z} \right\} $ is an orthonormal basis of  $\mathscr{V}_{S}^1$, there must exist a sequence  $\left\{h_n:n \in \mathbb{Z}\right\}$ such that
	
	\begin{align}\label{S3E6}
		\varphi_{S,0,0}(x)&=\sum_{n \in \mathbb{Z}}h_n \, \varphi_{S,1,n}(x)\notag
		\\
		&  =\sqrt{2} \sum_{n \in \mathbb{Z}}h_n\exp\left\{-\tfrac{i}{2B}\left(A(x^2-\tfrac{n^2}{4})+  2p\left(x-\tfrac{n}{2}\right)\right)\right\} \varphi{\left(2x-n\right)},
	\end{align}
	where 
	\begin{align}\label{V1}
		h_n = \sqrt{2}\int_{\mathbb R}\exp\left\{-\tfrac{i}{2B}\left(\tfrac{An^2}{4}+2pn\right)\right\}  \varphi{(x)} \, \overline{\varphi\left(2x-n\right)} \, dx.
	\end{align}
	Equation \eqref{S3E6} is referred to as the special affine refinement equation, whereas \eqref{V1} is known as the special affine low-pass filter. Applying SAFT on \eqref{S3E6} gives
	\begin{align}\label{V2}
		\mathcal{F}\big[\varphi\big]\Big(\tfrac{\zeta}{B}\Big) &= \sqrt{2}\,\sum_{n \in \mathbb{Z}}h_n\exp\left\{\tfrac{i}{2B}\big(\tfrac{An^2}{4}+pn\big)\right\}\int_{\mathbb R}\varphi(2x-n) \, e^{-i\zeta x/B} dx \notag
		\\
		&=S_0\Big(\tfrac{\zeta}{2B}\Big) \mathcal {F}\big[\varphi\big]\Big(\tfrac{\zeta}{2B}\Big),
	\end{align}
	 where 
	 \begin{align}\label{S3E8}
	 	S_0(\zeta)= \tfrac{1}{\sqrt{2}}\sum_{n \in \mathbb{Z}}h_n \, e^{-i n \zeta}\exp\left\{\tfrac{i}{2B}\left(\tfrac{An^2}{4}+pn\right)\right\}.
	 \end{align}
	 Clearly, $S_0(\zeta)$ is $2\pi$-periodic, that is, $S_0(\zeta + 2\pi) = S_0(\zeta)$. More generally, we have
	\begin{align}\label{S3E9}
		S_0(\zeta + 2k\pi) = S_0(\zeta), ~ \forall\,k \in \mathbb{Z}.
	\end{align}	
	Since $\left\{\varphi_{S,0,n}(x):n \in \mathbb{Z}\right\}$ forms an orthonormal basis of $\mathscr{V}_{S}^0$, it follows from  Lemma \ref{S3L3} that
	\begin{align}\label{S3E10}
		1&=\sum_{k \in \mathbb{Z}}{\left|\mathcal {F}\big[\varphi \big]\left(\zeta+2k\pi\right)\right|}^2 \notag 
		\\
		&=\sum_{k \in \mathbb{Z}}\left|S_0\left(\tfrac{\zeta}{2}+k\pi\right)\right|^2\left|\mathcal{F}\left(\tfrac{\zeta}{2}+k\pi\right)\right|^2 \notag 
		\\
		&=\sum_{k \in \mathbb{Z}}\left|S_0\left(\tfrac{\zeta}{2}+2k\pi\right)\right|^2\left|\mathcal{F}\left(\tfrac{\zeta}{2}+2k\pi\right)\right|^2+\sum_{k \in \mathbb{Z}}\left|S_0\left(\tfrac{\zeta}{2}+(2k+1)\pi\right)\right|^2\left|\mathcal{F}\left(\tfrac{\zeta}{2}+(2k+1)\pi\right)\right|^2 \notag 
		\\
		&=\left|S_0\left(\tfrac{\zeta}{2}\right)\right|^2\sum_{k \in \mathbb{Z}}\left|\mathcal{F}\left(\tfrac{\zeta}{2}+2k\pi\right)\right|^2+\left|S_0\left(\tfrac{\zeta}{2}+\pi\right)\right|^2\sum_{k \in \mathbb{Z}}\left|\mathcal{F}\left(\tfrac{\zeta}{2}+2k\pi+\pi\right)\right|^2 \notag 
		\\
		&=\left|S_0\left(\tfrac{\zeta}{2}\right)\right|^2+\left|S_0\left(\tfrac{\zeta}{2}+\pi\right)\right|^2.
	\end{align}
	
	\parindent=8mm\vspace{.05in}		
	Given an orthogonal MRA $\left\{\mathscr{V}_S^n\right\}_{n \in \mathbb{Z}}$, we define an associated sequence of closed subspaces $\left\{\mathscr{W}_S^n: n \in \mathbb{Z}\right\}$  of $L^2(\mathbb{(R)}$ by the relation $\mathscr{V}_S^{k+1} = \mathscr{V}_S^k \oplus\mathscr{W}_S^k, k \in\mathbb{Z}$. By definition, these subspaces preserve the scaling behavior of the multiresolution spaces   $\left\{\mathscr{V}_S^k : k \in \mathbb{Z}\right\}$, namely:
	\begin{align}\label{V3}
		f(x) \in\mathscr{W}_{S}^k \Longleftrightarrow \exp\left\{\tfrac{i}{2B}\left(3Ax^2+2xp\right)\right\} f(2x)  \in  \mathscr{W}_{S}^{k+1}, ~k \in \mathbb{Z}.
	\end{align}
	Moreover, the subspaces $ \mathscr{W}_S^k $	are mutually orthogonal, leading to the following decomposition formula:
	\begin{align}\label{S3E11}
		L^2(\mathbb R) = \bigoplus_{k \in \mathbb{Z}}\mathscr{W}_S^k
	\end{align}
	It is important to note that condition \eqref{S3E11} guarantees that an orthonormal basis for 
	$ L^2(\mathbb{R}) $ can be constructed by identifying an orthonormal basis for each subspace 
	$ \mathscr{W}_S^k $. Furthermore, condition \eqref{S3E9} implies that constructing an orthonormal basis for a single subspace $ \mathscr{W}_S^k $ is sufficient to generate the entire special affine wavelet basis across all scales. Therefore, the main objective reduces to the construction of a mother wavelet $ \psi \in \mathscr{W}_S^0 $ such that the collection 
	$ \{ \psi_{S,0,k}:k \in \mathbb{Z} \} $ forms an orthonormal basis for $\mathscr{W}_S^0 $.

	\parindent=8mm\vspace{.1in}
	Suppose $\psi_{S,0,0}(x) \in\mathscr{W}_S^0\subset \mathscr{V}_S^1 $. Then, there exists a sequence $\{d_k: {k \in \mathbb{Z}}\}$ such that 
	\begin{align}\label{S3E13}
		\psi_{S,0,0}(x) =\sqrt{2}\sum_{k \in \mathbb{Z}}d_k\exp\left\{-\tfrac{i}{2B}\left(A\left(x^2-\tfrac{k^2}{4}\right)+2p\left(x-\tfrac{k}{2}\right)\right)\right\}\varphi{\left(2x-k\right)}.
	\end{align}
	Equation \eqref{S3E13} is referred to as the special affine wavelet equation. Taking the SAFT of both sides of \eqref{S3E13}, we obtain
	\begin{align}\label{S3E14}
		\mathcal{F}\big[\psi\big]\Big(\tfrac{\zeta}{B}\Big) = S_1\left(\tfrac{\zeta}{2B}\right)  \mathcal{F}\big[\varphi\big]\left(\tfrac{\zeta}{2B}\right),
	\end{align}
	where  
	\begin{align}\label{S3E15}
		S_1(\zeta)= \tfrac{1}{\sqrt{2}}\sum_{k \in \mathbb{Z}} d_k\,  e^{-i k \zeta} \exp\left\{\tfrac{i}{2B}\left(\tfrac{Ak^2}{4}+pk\right)\right\}
	\end{align}
	is a $2\pi-$periodic function.

	\parindent=8mm\vspace{.1in}	
	Since $ \mathscr{W}_{S}^0 $ and $ \mathscr{V}_{S}^0 $ are orthogonal in $ \mathscr{V}_{S}^1 $, we have
	\begin{align}\label{V4}
		0&=\big\langle \varphi_{S,0,k}, \psi_{S,0,\ell}\big \rangle \notag
		\\
		&=\int_{\mathbb R}\mathscr{O}_{S}\big[\varphi_{S,0,k}\big](\zeta) \,\overline{\mathscr{O}_{S}\big[\psi_{S,0,\ell}\big](\zeta)} \, d\zeta  \notag
		\\
		&=\frac{1}{2\pi B} \exp\left\{-\tfrac{i}{2B}\left(A\left(\ell^2-k^2\right)-2p(\ell-k)\right)\right\} \int_{\mathbb R}  e^{-i\zeta (\ell-k)/B} \mathcal {F}\big[\varphi\big]\Big(\tfrac{\zeta}{B}\Big)\,\overline{\mathcal {F}\left(\psi\right)\Big(\tfrac{\zeta}{B}\Big)} \, d\zeta  \notag
		\\
		&=\frac{1}{2\pi }\exp\left\{-\tfrac{i}{2B}\left(A\left(\ell^2-k^2\right)-2p(\ell-k)\right)\right\}\int_{\mathbb R}\, e^{-i\zeta (\ell-k)}\mathcal {F}\big[\varphi\big](\zeta)\overline{\mathcal {F}\big[\psi\big](\zeta)} \, d\zeta  \notag
		\\
		&=\int_{\mathbb R} e^{-i\zeta (\ell-k)} \mathcal {F}\big[\varphi\big](\zeta)\overline{\mathcal {F}\big[\psi\big](\zeta)}\, d\zeta  \notag
		\\
		&=\sum_{m \in \mathbb{Z}}\int_{4m\pi}^{4(m+1)\pi}\, e^{-i\zeta (\ell-k)} \mathcal {F}\big[\varphi\big](\zeta)\overline{\mathcal {F}\big[\psi\big](\zeta)}\,  d\zeta.
	\end{align}
	Substituting equations \eqref{V2} and \eqref{S3E14} into \eqref{V4}, we obtain	
	\begin{align}\label{S3E19}
		0&=\sum_{m \in \mathbb{Z}}\int_{0}^{4\pi}\exp\left\{{-i(\zeta+4m\pi) (\ell-k)}\right\} S_0\left(\tfrac{\zeta}{2}+2m\pi\right)  \overline{ S_1\left(\tfrac{\zeta}{2}+2m\pi\right)}            \,        {\left|\mathcal{F}\big[\varphi\big]\left(\tfrac{\zeta}{2}+2m\pi\right) \right|}^2d\zeta \notag
		\\
		&=\sum_{m \in \mathbb{Z}}\int_{0}^{4\pi}\exp\left\{{-i(\zeta+4m\pi) (\ell-k)}\right\} S_0\left(\tfrac{\zeta}{2}\right)  \overline{ S_1\left(\tfrac{\zeta}{2}\right)} \,    {\left|\mathcal{F}\big[\varphi\big]\left(\tfrac{\zeta}{2}+2m\pi\right) \right|}^2d\zeta \notag
		\\
		&=\sum_{m \in \mathbb{Z}}\int_{0}^{2\pi}\, e^{-i\zeta (\ell-k)} S_0\left(\tfrac{\zeta}{2}\right)  \overline{ S_1\left(\tfrac{\zeta}{2}\right)} \,   {\left|\mathcal{F}\big[\varphi\big]\left(\tfrac{\zeta}{2}+2m\pi\right) \right|}^2d\zeta \notag
		\\
		&\qquad\qquad+\sum_{m \in \mathbb{Z}}\int_{2\pi}^{4\pi}\, e^{-i\zeta (\ell-k)} S_0\left(\tfrac{\zeta}{2}\right)  \overline{ S_1\left(\tfrac{\zeta}{2}\right)} \,   {\left|\mathcal{F}\big[\varphi\big]\left(\tfrac{\zeta}{2}+2m\pi\right) \right|}^2d\zeta \notag
		\\
		&=\int_{0}^{2\pi}\, e^{-i\zeta (\ell-k)} S_0\left(\tfrac{\zeta}{2}\right)  \overline{ S_1\left(\tfrac{\zeta}{2}\right)}    {\sum_{m \in \mathbb{Z}}\left|\mathcal{F}\big[\varphi\big]\left(\tfrac{\zeta}{2}+2m\pi\right) \right|}^2d\zeta \notag
		\\
		&\qquad\qquad+\sum_{m \in \mathbb{Z}}\int_{0}^{2\pi}\, e^{-i\zeta (\ell-k)} S_0\left(\tfrac{\zeta}{2}+\pi\right)  \overline{ S_1\left(\tfrac{\zeta}{2}+\pi\right)}    {\sum_{m \in \mathbb{Z}}\left|\mathcal{F}\big[\varphi\big]\left(\tfrac{\zeta}{2}+(2m+1)\pi\right) \right|}^2d\zeta \notag
		\\
		&=	\int_0^{2\pi} \, e^{-i(\ell-k)\zeta} \left[ S_0\left( \tfrac{\zeta}{2} \right)S_1\left( \tfrac{\zeta}{2} \right) + S_0\left( \tfrac{\zeta}{2} + \pi \right)S_1\left( \tfrac{\zeta}{2} + \pi \right) \right] d\zeta.
		\end{align}
	Therefore, we conclude that
	\begin{align}\label{S3E20}
		S_0\left( \tfrac{\zeta}{2} \right)S_1\left( \tfrac{\zeta}{2} \right) + S_0\left( \tfrac{\zeta}{2} + \pi \right)S_1\left( \tfrac{\zeta}{2} + \pi \right)  = 0.
	\end{align}
	Equation  \eqref{S3E20} can be written in the matrix form as
	\begin{align*}
		\mathscr H(\zeta) \mathscr H^*(\zeta) = I_{2 \times 2},
	\end{align*}
	where $\mathscr H^* $ denotes the conjugate transpose of $\mathscr H, I_{2 \times 2} $ is the $ 2 \times 2 $ identity matrix, and $\mathscr H$ is defined by
	\begin{align*}
		\mathscr	H(\zeta) = \begin{bmatrix}
			S_0(\zeta) & S_0(\zeta + \pi) \\
			S_1(\zeta) & S_1(\zeta + \pi)
		\end{bmatrix}.
	\end{align*}
	
	\parindent=8mm\vspace{.05in}	
	Since ${S_0(\zeta)}$ and ${S_0(\zeta + \pi)}$ cannot simultaneously vanish on a set of positive measure due to their orthogonality property, there exists a $2\pi$-periodic function $\rho(\zeta)$ such that
	\begin{align}\label{V5}
		\big( S_1(\zeta), S_1(\zeta + \pi) \big) = \big( \rho(\zeta)\,\overline{S_0(\zeta + \pi)}, -\rho(\zeta)\,\overline{S_0(\zeta )}\,\big).
	\end{align}
	Since
	\begin{align*}
		S_1(\zeta)=\rho(\zeta)\,\overline{S_0(\zeta + \pi)},
	\end{align*}	
	we have		
	\begin{align}\label{V7}	
		 S_1(\zeta+\pi)&=\rho(\zeta+\pi) \,\overline{S_0(\zeta + 2\pi)}\notag\\
		 &=\rho(\zeta+\pi)\,\overline{S_0(\zeta)}.
	\end{align}
	Therefore, from equation \eqref{V5}, we have
	\begin{align*}
		-\rho(\zeta)\,\overline{S_0(\zeta)} = \rho(\zeta + \pi)\,\overline{S_0(\zeta)},
	\end{align*}
	which implies that
	\begin{align}\label{V6}
		\rho(\zeta) +\rho(\zeta + \pi)=0.
	\end{align}
	Equation \eqref{V6} suggests that the function $\rho(\zeta)$ is $2\pi$-periodic. Therefore, it can be represented using its Fourier series expansion as:
	\begin{align*}
		\rho(\zeta) = \sum_{k \in \mathbb{Z}} c_k \, e^{-ik\zeta},
	\end{align*}
	where
	\begin{align*}
		c_k &= \frac{1}{2\pi} \int_0^{2\pi} \rho(\zeta) \, e^{ik\zeta}d\zeta
		\\
		&=\frac{1}{2\pi} \left[ \int_0^{\pi} \rho(\zeta) \, e^{ik\zeta} d\zeta + \int_{\pi}^{2\pi} \rho(\zeta) \, e^{ik\zeta} \, d\zeta \right]
		\\
		&= \frac{1}{2\pi} \left[ \int_0^{\pi} \rho(\zeta) \, e^{ik\zeta} d\zeta + \int_0^{\pi} \rho(\zeta + \pi) \, e^{ik(\zeta + \pi)}d\zeta \right]
		\\
		&=\frac{1}{2\pi} \left[ \int_0^{\pi} \rho(\zeta) \, e^{ik\zeta}d\zeta - (-1)^k \int_0^{\pi} \rho(\zeta) \, e^{ik\zeta}d\zeta \right]
		\\
		&= \tfrac{1}{2\pi} \left( 1 - (-1)^k \right) \int_0^{\pi} \rho(\zeta) \, e^{ik\zeta}  d\zeta.
	\end{align*}
	From the above discussion, we conclude that $c_k = 0$ for all even integers $k = 2m$, where $m \in \mathbb{Z}$. Hence, we can write: 
	\begin{align*}
		\rho(\zeta) &= \sum_{\ell \in \mathbb{Z}} c_{\ell+1} \, e^{-i(2l+1)\zeta}
		\\
		&= e^{-i\zeta} \,\gamma(2\zeta),
	\end{align*}
	where $\gamma(2\zeta)=\sum_{\ell \in \mathbb{Z}} c_{2\ell+1} \, e^{-2i\ell\zeta}$. 
	
	\parindent=8mm\vspace{.1in}
	Now, using equation \eqref{V7}, we obtain
	\begin{align*}
		S_1(\zeta)&=\rho(\zeta)\overline{S_0(\zeta + \pi)}
		\\
		&= e^{-i\zeta} \, \gamma(2\zeta)\,\overline{S_0(\zeta + \pi)}.
	\end{align*}	
	In particular, setting $\gamma(2\zeta) = 1$, we obtain
	\begin{align}\label{V8}
		S_1(\zeta)=\, e^{-i\zeta} \, \overline{S_0(\zeta + \pi)}.
	\end{align}	
	Substituting equations \eqref{S3E8} and \eqref{S3E15} into \eqref{V8}, we obtain
	\begin{align*}
		& \tfrac{1}{\sqrt{2}} \sum_{n \in \mathbb{Z}} d_n\,e^{-i n \zeta} \exp\left\{\tfrac{i}{2B}\left(\tfrac{An^2}{4}+pn\right)\right\} =\tfrac{e^{-i\zeta}}{\sqrt{2}} \, \sum_{n \in \mathbb{Z}} (-1)^n \, \overline{h_n}\,e^{i n \zeta} \exp\left\{-\tfrac{i}{2B}\left(\tfrac{An^2}{4}+pn\right)\right\}.
	\end{align*}
	Equivalently,
	\begin{align*}
		\sum_{n \in \mathbb{Z}}d_n\exp\left\{\tfrac{i}{2B}\left(\tfrac{An^2}{4}+pn\right)\right\} e^{-i\zeta(n-k)} = \sum_{n \in \mathbb{Z}}(-1)^n \, \overline{h_n} \exp\left\{-\tfrac{i}{2B}\left(\tfrac{An^2}{4}+pn\right)\right\} e^{-i\zeta(1-n-k)}.
	\end{align*}
	On integrating both sides, we obtain	
	\begin{align*}
		&\sum_{n \in \mathbb{Z}}d_n\exp\left\{\tfrac{i}{2B}\left(\tfrac{An^2}{4}+pn\right)\right\}\int_{\mathbb R} e^{-i\zeta(n-k)} d\zeta=\sum_{n \in \mathbb{Z}}(-1)^n \, \overline{h_n} \exp\left\{-\tfrac{i}{2B}\left(\tfrac{An^2}{4}+pn\right)\right\}\int_{\mathbb R}e^{-i\zeta(1-n-k)}d\zeta,
	\end{align*}	
	which implies that 		
	\begin{align}\label{V9}
		\sum_{n \in \mathbb{Z}}d_n\exp\left\{\tfrac{i}{2B}\left(\tfrac{An^2}{4}+pn\right)\right\}\delta_{n-k}= \sum_{n \in \mathbb{Z}} (-1)^n \, \overline{h_n} \exp\left\{-\tfrac{i}{2B}\left(\tfrac{An^2}{4}+pn\right)\right\}\delta_{1-n-k}.
	\end{align}	
	The wavelet coefficients ${d_k}$ can be derived from equation \eqref{V9} and are given by	
	\begin{align}\label{S3E25}
		 d_k=(-1)^{1-k}\,\overline{h_k}\exp\left\{-\tfrac{i}{2B}\left(\tfrac{A\left((1-k)^2+k^2\right)}{4}+p\right)\right\}.
	\end{align}

	\parindent=8mm\vspace{.1in}
	The above discussion leads to the formulation of the following theorem.
	\begin{theorem}\label{S3T1}
		If $\{V^{n}_S:n \in \mathbb{Z}\}$ denoted the special affine MRA associated with the scaling function $\varphi$, then there exists a function $\psi$ such that
		\begin{align}\label{S3E26}
			\psi_{S,0,0}(x) =\sqrt{2} \, \sum_{k \in \mathbb{Z}}d_k\exp\left\{-\tfrac{i}{2B}\left(A \left(x^2-\tfrac{k^2}{4}\right)+2p\left(x-\tfrac{k}{2}\right)\right)\right\}  \varphi{\left(2x-k\right)},
		\end{align}
		where the wavelet coefficients $d_k$ are given by \eqref{S3E25}, and the scaling coefficients $h_k$ are given by \eqref{V1}.  Consequently, the system $\{ \psi_{S,k,n} \,:\, k,n \in \mathbb{Z} \}$	forms an orthonormal basis of $L^2(\mathbb R)$.
	\end{theorem}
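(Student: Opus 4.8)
The plan is to verify that the function $\psi$ defined through the wavelet equation \eqref{S3E13} with coefficients $d_k$ as in \eqref{S3E25} generates an orthonormal basis of the detail space $\mathscr{W}_S^0$, and then to propagate this basis across all scales using the MRA axioms of Definition \ref{S3D1}. The construction of $\psi$ and the explicit derivation of the $d_k$ have already been carried out in the discussion preceding the statement, so the existence of $\psi$ satisfying \eqref{S3E26} is immediate. The remaining work splits into three facts: (i) the system $\{\psi_{S,0,k}:k\in\mathbb Z\}$ is orthonormal; (ii) it is orthogonal to $\mathscr{V}_S^0$; and (iii) it is complete in $\mathscr{W}_S^0$.

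First I would establish orthonormality. By the same SAFT–Parseval computation used in Lemma \ref{S3L3}, the inner product $\big\langle\psi_{S,0,k},\psi_{S,0,\ell}\big\rangle$ collapses to $\delta_{k,\ell}$ exactly when $\sum_{m\in\mathbb Z}\left|\mathcal F[\psi](\zeta+2m\pi)\right|^2=1$. Using the wavelet refinement identity \eqref{S3E14}, namely $\mathcal F[\psi](\zeta/B)=S_1(\zeta/2B)\,\mathcal F[\varphi](\zeta/2B)$, I would split the periodization over even and odd indices precisely as in the derivation of \eqref{S3E10}, reducing the sum to $\left|S_1(\zeta/2)\right|^2+\left|S_1(\zeta/2+\pi)\right|^2$. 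Invoking the filter relation \eqref{V8}, $S_1(\zeta)=e^{-i\zeta}\,\overline{S_0(\zeta+\pi)}$, this equals $\left|S_0(\zeta/2+\pi)\right|^2+\left|S_0(\zeta/2)\right|^2$, which is $1$ by \eqref{S3E10}. The orthogonality to $\mathscr{V}_S^0$ is already contained in the computation \eqref{V4}--\eqref{S3E20}, where the quadrature condition \eqref{S3E20} satisfied by our choice of $S_1$ forces $\big\langle\varphi_{S,0,k},\psi_{S,0,\ell}\big\rangle=0$ for all $k,\ell\in\mathbb Z$.

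The main obstacle is completeness: showing that $\{\psi_{S,0,k}\}$ not only lies in $\mathscr{W}_S^0$ but actually spans it. I would argue via the paraunitarity of the polyphase matrix $\mathscr H(\zeta)$. Since $\mathscr H(\zeta)\mathscr H^*(\zeta)=I_{2\times 2}$, the analysis filters $S_0$ and $S_1$ constitute a complete orthonormal channel system on the SAFT side, so that every $f\in\mathscr{V}_S^1$ decomposes orthogonally into a $\varphi$-component and a $\psi$-component; this yields $\mathscr{V}_S^1=\mathscr{V}_S^0\oplus\overline{\vv}\{\psi_{S,0,k}:k\in\mathbb Z\}$. Comparing with the defining splitting $\mathscr{V}_S^1=\mathscr{V}_S^0\oplus\mathscr{W}_S^0$ forces $\overline{\vv}\{\psi_{S,0,k}\}=\mathscr{W}_S^0$, whence $\{\psi_{S,0,k}:k\in\mathbb Z\}$ is an orthonormal basis of $\mathscr{W}_S^0$.

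Finally I would pass to all scales. The dilation equivalence \eqref{V3} transports the orthonormal basis $\{\psi_{S,0,k}\}$ of $\mathscr{W}_S^0$ to an orthonormal basis $\{\psi_{S,k,n}:n\in\mathbb Z\}$ of each $\mathscr{W}_S^k$, while the mutual orthogonality of the detail spaces together with the decomposition $L^2(\mathbb R)=\bigoplus_{k\in\mathbb Z}\mathscr{W}_S^k$ from \eqref{S3E11} assembles these into an orthonormal basis $\{\psi_{S,k,n}:k,n\in\mathbb Z\}$ of $L^2(\mathbb R)$. The two points requiring the most care are the paraunitarity argument underpinning completeness, and the verification that the scaling in \eqref{V3} preserves both normalization and orthonormality when moving between scales.
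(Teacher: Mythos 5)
Your proposal is correct, and it actually does more than the paper does: the paper offers no proof after the theorem statement --- the theorem is presented as a summary of the preceding discussion, which runs only in the necessity direction (assume $\psi_{S,0,0}\in\mathscr{W}_S^0\subset\mathscr{V}_S^1$, derive the filter $S_1$, impose $\mathscr{W}_S^0\perp\mathscr{V}_S^0$ to get \eqref{S3E20}, choose $\gamma\equiv 1$ to get \eqref{V8}, and read off the coefficients \eqref{S3E25}). It never checks sufficiency, i.e.\ that the $\psi$ so constructed has orthonormal translates, is orthogonal to $\mathscr{V}_S^0$, spans $\mathscr{W}_S^0$, and generates a basis of $L^2(\mathbb{R})$ across scales. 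Your three steps supply exactly that missing half by the standard filter-bank argument: the periodization identity $\left|S_1\left(\tfrac{\zeta}{2}\right)\right|^2+\left|S_1\left(\tfrac{\zeta}{2}+\pi\right)\right|^2=\left|S_0\left(\tfrac{\zeta}{2}+\pi\right)\right|^2+\left|S_0\left(\tfrac{\zeta}{2}\right)\right|^2=1$ (legitimate, since the proof of Lemma \ref{S3L3} uses only the chirp-times-translate structure and hence applies verbatim with $\psi$ in place of $\varphi$); orthogonality to $\mathscr{V}_S^0$ by running the chain of equalities \eqref{V4}--\eqref{S3E19} in reverse; and completeness from paraunitarity of $\mathscr{H}$, which is the one genuinely new ingredient you add. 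Two cautions. First, \eqref{S3E19}--\eqref{S3E20} as printed are missing a conjugate on $S_1$ (substituting \eqref{V2} and \eqref{S3E14} into \eqref{V4} produces $S_0\overline{S_1}$ terms); with the literal unconjugated form, the choice \eqref{V8} does \emph{not} annihilate the sum, so your orthogonality step tacitly uses the corrected condition $S_0(\zeta)\overline{S_1(\zeta)}+S_0(\zeta+\pi)\overline{S_1(\zeta+\pi)}=0$, and you should state this explicitly. Second, your closing caveat is apt: the paper never defines $\psi_{S,k,n}$ for general $k,n$, so for the dilation equivalence \eqref{V3} to preserve orthonormality you must adopt the analogue of \eqref{S3E3}, including the $2^{k/2}$ normalization factor. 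In short, the paper's route is economical but shows only what the coefficients must be if a wavelet exists; yours upgrades the discussion to an actual existence proof.
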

	
	Towards the end of this section, we provide illustrative examples to enhance comprehension of the orthonormal special affine wavelet construction.
	
	\begin{example}\label{S3EX5}
		Let $\varphi(x)=\mathrm{\text{sinc}}(\pi x)$ be the scaling function for the SAMRA  $\{\mathscr{V}^{S}_n=\varphi_{S,0,n}(x):\,n \in \mathbb{Z}\}$, where $ \varphi_{S,0,n}(x) = e^{-i\left(A(x^2 - n^2) + 2p(x- n)\right)/2B} \, \text{sinc}(x - n)$. Then, by virtue of equations \eqref{V1} and \eqref{S3E25}, the scaling coefficients $h_n$ and the wavelet coefficients $d_n$ are given by		
		\begin{align*}
			h_n =
			\begin{cases}
				\tfrac{1}{\sqrt{2}}, & n = 0 \\[1.5ex]
				\tfrac{\sqrt{2}}{\pi n} \,
				\exp\left\{-\tfrac{i}{2B} \left( \tfrac{A n^2}{4} + pn \right) \right\}  
				\sin\left( \tfrac{n\pi}{2} \right), & n \neq 0
			\end{cases}
		\end{align*}
		and
		\begin{align*}
			d_n =
			\begin{cases}
				\tfrac{1}{\sqrt{2}} \, \exp\left\{-\tfrac{i}{2B}\left( \tfrac{A}{4} + p \right)\right\}, & n = 1 \\[1.5ex]
				\tfrac{\sqrt{2}}{\pi(n - 1)} \, (-1)^{-n}  \cos\left( \tfrac{n\pi}{2} \right) \, 
				\exp\left\{- \tfrac{i}{2B} \left(\tfrac{A n^2}{4} + p n \right)\right\}, & n \neq 1.
			\end{cases}
		\end{align*}
	Thus, the special affine wavelet corresponding to the scaling function $\text{sinc}(x)$ is given by
	\begin{align}\label{V10}
		\psi_{S,0,0}(x) &=
		\sqrt{2} ~ d_1  \exp\left\{-\tfrac{i}{2B}\left(A \left(x^2 - \tfrac{1}{4}\right) + 2p \left(x- \tfrac{1}{2}\right)\right)\right\} \varphi(2x - 1)	\notag	
		\\
		&\qquad+\sqrt{2} \, \sum_{\substack{n \in \mathbb{Z}, n \neq 1}}
		d_n \, \exp\left\{-\tfrac{i}{2B} \left( A \left( x^2 - \tfrac{n^2}{4} \right) + 2p\left( x - \tfrac{n}{2} \right) \right)\right\} \varphi(2x - n) \notag
		\\
		&=\exp\left\{ -\tfrac{i}{2B} \left( \tfrac{A}{4} + p \right) \right\} 
		\exp\left\{ -\tfrac{i}{2B} \left( A \left( x^2 - \tfrac{1}{4} \right) + 2p\left( x- \tfrac{1}{2} \right) \right) \right\} \varphi(2x - 1)  \notag
		\\
		&\qquad+\tfrac{2}{\pi(n - 1)} \, \sum_{\substack{n \in \mathbb{Z}, n \neq 1}}  (-1)^{-n}  \cos\left( \tfrac{n\pi}{2} \right) 
		\exp\left\{- \tfrac{i}{2B} \left(\tfrac{A n^2}{4} + p n \right)\right\} \notag
		\\
		&\qquad \times 
		\exp\left\{ -\tfrac{i}{2B} \left( A \left( x^2 - \tfrac{n^2}{4} \right) + 2p\left( x - \tfrac{n}{2} \right) \right) \right\} \varphi(2x - n) \notag
		\\
		&=\exp\left\{ -\tfrac{i}{2B} \left( \tfrac{Ax^2}{4} + 2px \right) \right\} \text{sinc}(\pi(2x - 1)) \notag
		\\
		&\qquad + 2\sum_{\substack{n \in \mathbb{Z}, n \neq 1}} \exp\left\{-\tfrac{i}{2B} \left( Ax^2+2px \right) \right\} \text{sinc}\left( \pi(2x - n)\right).
	\end{align}
	Next, we illustrate the special affine wavelets defined in \eqref{V10} in Figure \ref{F1}, corresponding to two different configurations of the parametric matrix:
	$S = \left[\begin{smallmatrix} 1 & 1 & : & 2 \\ -1 & 0 & : & -1 \end{smallmatrix}\right]$ and
	$S' = \left[\begin{smallmatrix} 3 & 2 & : & 1 \\ 1 & 1 & : & -2 \end{smallmatrix}\right]$, respectively.
	
	\begin{figure}[htbp]
		\centering
		\includegraphics[width=0.75\textwidth]{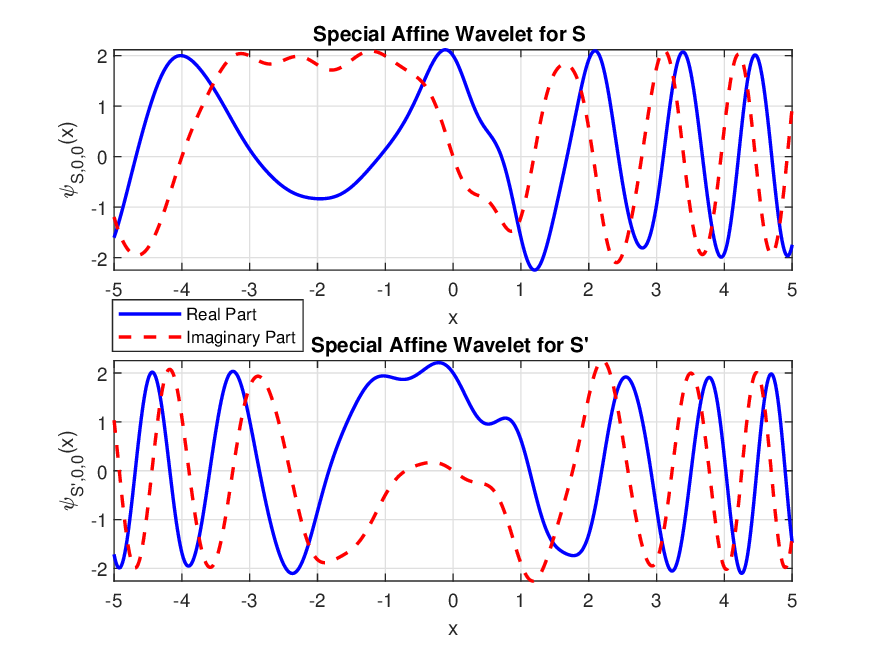}
		\caption{\small Special affine wavelets $\psi_{S,0,0}(x)$ corresponding to the scaling function $\varphi(x) = \text{sinc}(\pi x)$, for two SAFT parameter matrices $S$ and $S'$.}
		\label{F1}
	\end{figure}	
\end{example}

	\newpage	
	\begin{example}\label{S3EX6}
		Let $\varphi(x) = \chi_{[0,1)}(x)$, where $\chi_{[0,1)}(x)$ denotes the characteristic function of the interval $[0, 1)$. It is straightforward to verify that the system $\left\{ \varphi_{S,0,n}(x) : n \in \mathbb{Z} \right\}$ forms an orthonormal set. Consequently, it constitutes an orthonormal basis for the subspace $\mathscr{V}_{S}^0$ and thus serves as a scaling function associated with the MRA $\left\{ \mathscr{V}_{S}^n:n \in \mathbb{Z}\right\}$.
		
		\parindent=8mm\vspace{.1in}
		Since $\varphi(x) = \chi_{[0,1)}(x)$, the scaling coefficients $h_n$ become
		\begin{align*}
			h_n =
			\begin{cases}
				\tfrac{1}{\sqrt{2}}, & n = 0 \\[1.3ex]
				\tfrac{1}{\sqrt{2}} \, \exp\left\{-\tfrac{i}{2B}{ \left(\tfrac{A}{4}+ p \right)}\right\} & n = 1 \\[1.3ex]
				0, & \text{otherwise}
			\end{cases}.
		\end{align*}
		Consequently, the wavelet coefficients $d_n$ given by \eqref{S3E25} take the form
		\begin{align*}
			d_n=
			\begin{cases}
				\tfrac{1}{\sqrt{2}}, & n = 0 \\[1.3ex]
				\tfrac{1}{\sqrt{2}} \, \exp\left\{-\tfrac{i}{2B}{ \left(\tfrac{A}{4}+ p \right)}\right\}, & n = 1 \\[1.3ex]
				0, & \text{otherwise}
			\end{cases}.
		\end{align*}	
		Thus, the special affine wavelet corresponding to scaling function $\varphi(x) = \chi_{[0,1)}(x)$ is given by
		\begin{align}\label{V11}
			\psi_{S,0,0}(x) =
			\begin{cases}
				- \exp{\left\{-\tfrac{i}{2B} \left(Ax^2 + p\right)\right\}}, & 0 \leq x < \tfrac{1}{2} 
				\\[2mm]
				\exp\left\{{-\tfrac{i}{2B}\left(Ax^2 + p\right)}\right\}, & \tfrac{1}{2} \leq x < 1.
			\end{cases}
		\end{align}	
	It is evident that the special affine Haar wavelet provides enhanced adaptability compared to the classical Haar wavelet, owing to the freedom in selecting the parameters of the matrix $S$.
	
	\parindent=8mm\vspace{.1in}
	The graphical representation of the special affine wavelet defined in \eqref{V11}, corresponding to the parametric matrices
	$S = \left[\begin{smallmatrix} 1 & 1 & : & 2 \\ -1 & 0 & : & -1 \end{smallmatrix}\right]$ and
	$S' = \left[\begin{smallmatrix} 3 & 2 & : & 1 \\ 1 & 1 & : & -2 \end{smallmatrix}\right]$,
	is shown in Figure \ref{F2}.
	
	\begin{figure}[htbp]
		\centering
		\includegraphics[width=0.75\textwidth]{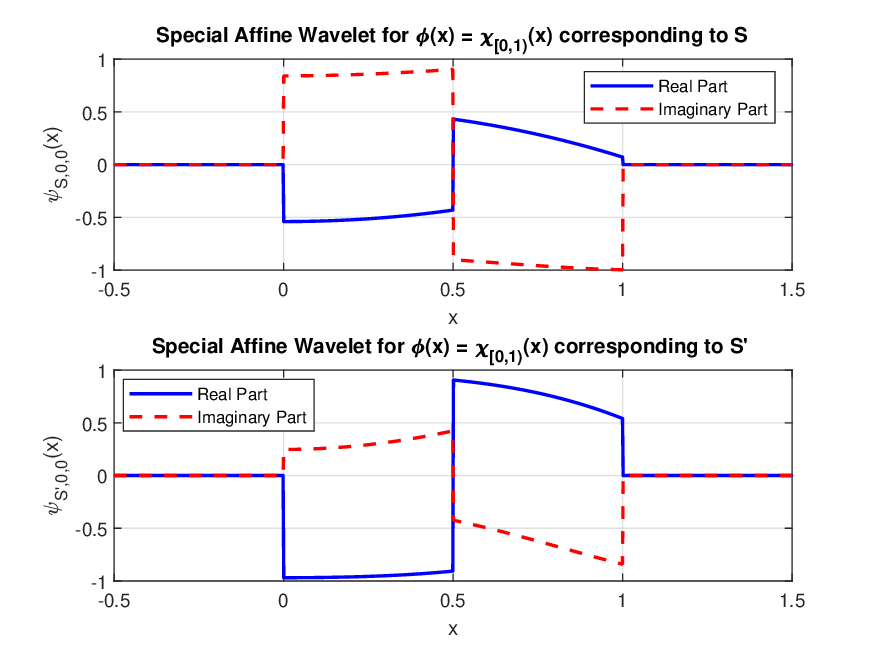}
		\caption{\small Special affine wavelets $\psi_{S,0,0}(x)$ corresponding to the scaling function $\varphi(x) = \chi_{[0,1)}(x)$, for two SAFT parameter matrices $S$ and $S'$.}
		\label{F2}
	\end{figure}	
	\end{example}
	\section{Function Approximation}
	To approx
	imate a function \( f(\T)\in L^2(\mathbb{R}) \) in $[0,1]$ by a series of special affine wavelet as it is done with Haar wavelet (see \cite{Lepik2014}), we divide the domain \({[0, 1]}\) into \(2M\) equal subintervals, each with a length of \(\Delta \T = {\frac{1}{2M}}\). The maximum level of resolution, \(\p\), is defined by \(M = 2^\p\). The special affine wavelet, denoted by \(h_{\Q}(\T)\), For \(\Q > 2\), it is defined as follows:

\begin{equation}\label{P1_eq1}
    h_{\Q}(\T) = 
     \begin{cases}
        -\Im(\psi_{S,0,0}(x)) & \text{if } \T \in  [\y_{1}(\Q), \y_{2}(\Q)),\\
        ~~\Im(\psi_{S,0,0}(x)) & \text{if }  \T \in [\y_{2}(\Q), \y_{3}(\Q)),\\
        ~~0 & \text{elsewhere}, 
    \end{cases}
\end{equation}
where,
\begin{align}\label{P1_eq2}
{ \y_{1}(\Q) =  2k\mu \Delta \mathbf{\T},}\;\; \;\;& { \y_{2}(\Q) =  (2k+1)\mu \Delta \mathbf{\T},}\nonumber\\ 
{\y_{3}(\Q) =  2 (k + 1)\mu \Delta \mathbf{\T},}\;\;\;\; & {\mu = M/m, \;\; \Delta \mathbf{\T} = 1/2M.}
\end{align}
Here $ \J = 0, 1,..., \p$ and $k = 0, 1,..., m-1$ stand for dilation and translations parameters, respectively, where \( m = 2^{\J} \). The index $ \Q $ is calculated as $ \Q = m+k+1 $. For $ \Q = 1 $ and $ \Q = 2 $ it is defined as follows
\begin{equation}\label{P1_eq3}
h_{1}(\T) = \chi_{[0,1)}(x),
\end{equation}
\begin{equation}\label{P1_eq4}
 h_{2}(\T) = 
     \begin{cases}
        -\Im(\psi_{S,0,0}(x))  & \text{for } \T \in  [0, 0.5),\\
        ~~\Im(\psi_{S,0,0}(x))  & \text{for }  \T \in [0.5, 1),\\
        ~~0 & \text{elsewhere }. 
    \end{cases}
\end{equation}
Now a square integrable function \( f(\T) \) over the interval {\([0, 1]\)} can be expanded in terms of special affine wavelets as follows,
\begin{equation}\label{P1_eq6}
f(\T) = \sum_{\Q=1}^{\infty} a_{\Q} h_{\Q}(\T).
\end{equation}
For computational purpose we truncate the infinite series
\begin{equation}\label{P1_eq6}
f(\T) = \sum_{\Q=1}^{2 M} a_{\Q} h_{\Q}(\T).
\end{equation}
and assume collocation points as \begin{equation}\label{collocation}
{\eta_{cl} = \frac{\tilde{\eta}_{cl-1} + \tilde{\eta}_{cl}}{2}, \quad \text{for } cl = 1, 2, \ldots, 2M.}
\end{equation}
where $\tilde{\eta}_{cl} = cl \Delta \T,~\text{for } cl = 0, 1, \ldots, 2M$. Now by using the collocation points in \eqref{P1_eq6} we obtain a system of linear equations where $a_{\Q}$ are unknowns. By solving the system of linear equations we get $a_{\Q}$ and by substituting the $a_{\Q}$ \eqref{P1_eq6} we get an approximation of the function. For $x^2$ we give some plots below:
\begin{center}
\begin{figure}[h]
\includegraphics[width=8cm]{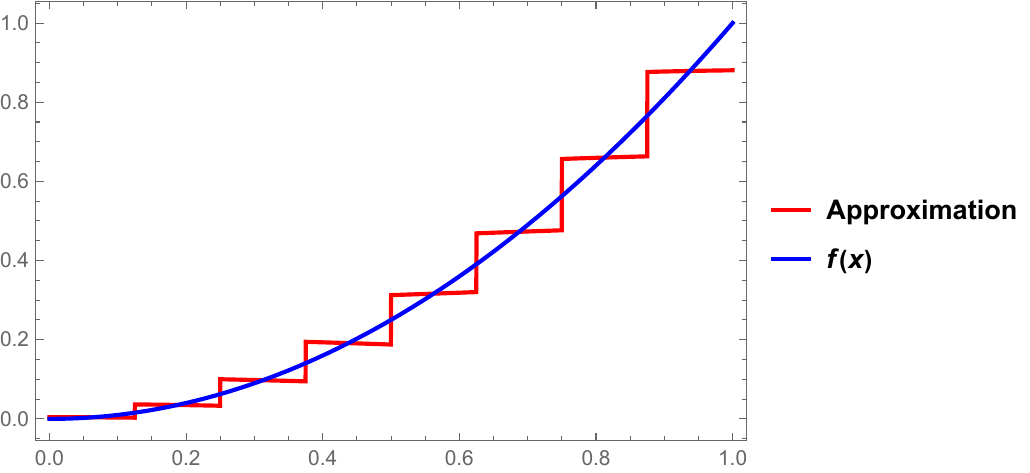}\includegraphics[width=8cm]{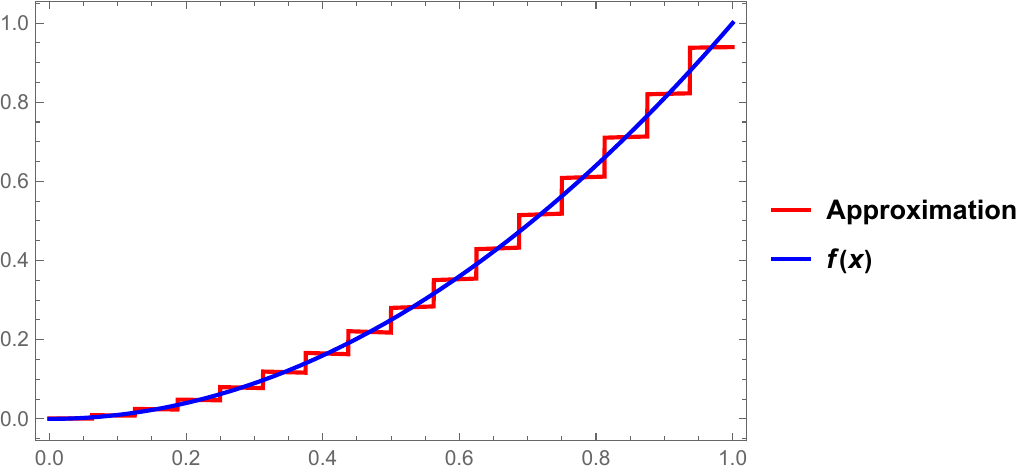}
\caption{{Affine wavelet approximation of $x^2$ on $[0,1]$ for $J=2$ and $J=3$ by using Imaginary part of $\psi$.}}
\end{figure}
 \end{center}
 
\begin{table}[h!]
\centering
 \begin{tabular}{|c|c|c|c|c|c|c|}
\hline
$L^\infty$ Error & J=1 & J=2 & J=3 & J=4 & J=5 & J=6\\
\hline
Special Affine & 0.132906 & 0.0675038 & 0.0303483 & 0.0163966 & 0.00740868 & 0.00412899\\
\hline
 Haar & 0.140625 & 0.0689062 & 0.0322266 & 0.0167871 & 0.00787354 &0.00422424\\
\hline
\end{tabular}
\caption{Comparison of $L^\infty$ error with Haar Wavelet for the function $x^2$ on $[0,1]$.}
\label{tab:sample}
\end{table}

	\section{Conclusion}\label{S4}
	In this work, we developed a comprehensive multiresolution framework within the SAFT domain, facilitating the construction of orthonormal bases in $L^2(\mathbb R)$. We established a sampling theorem for band-limited signals in the SAFT setting, which underpins the formulation of the SAMRA. Additionally, we proposed a method for generating orthogonal bases using this SAFT-based MRA structure. These theoretical developments were validated through illustrative examples, highlighting the practical applicability and flexibility of the proposed approach in time-frequency analysis.
	
	\parindent=0mm\vspace{.1in}
	
	{\small {\it Acknowledgments:} The second author is supported by Council of Scientific and
		Industrial Research (CSIR), Government of India, under file No. 09/1023(19437)/2024-EMR-I.
		
		\parindent=0mm\vspace{.1in}
		
		{\small {\it Competing interests:} The authors declare that they have no competing interests.

\end{document}